\newcommand{\Id}{\mathrm{Id}}
\newcommand{\Ric}{\mathrm{Ric}}
\newcommand{\e}{\epsilon}
\newcommand{\n}{\nabla}
\renewcommand{\Im}{\mathrm{Im} \,}
\renewcommand{\L}{\mathcal{L}}
\renewcommand{\o}{\omega}
\renewcommand{\O}{\mathcal O}
\renewcommand{\a}{\alpha}
\newcommand{\de}{\delta}
\renewcommand{\b}{\beta}
\renewcommand{\d}{\partial}
\newcommand{\abs}[1]{\left\lvert#1\right\rvert}
\renewcommand{\H}{\mathcal H}
\newcommand{\U}{\mathcal U}
\newcommand{\W}{\mathcal W}
\newcommand{\I}{\mathrm I}
\newcommand{\J}{\mathrm J}
\newcommand{\V}{\mathcal V}
\newcommand{\T}{\mathcal T}
\newcommand{\md}{\mathrm d}
\newcommand{\p}{\mathrm p}
\newcommand{\s}{\sigma}
\theoremstyle{plain}
\newtheorem{thm}{Theorem}[section]
\newtheorem{prop}[thm]{Proposition}
\newtheorem{lemma}[thm]{Lemma}
\theoremstyle{definition}
\newtheorem{definition}[thm]{Definition}
\newtheorem{remark}[thm]{Remark}
\newtheorem{example}[thm]{Example}
\newcommand{\R}[0]{\mathbb{R}}							% Real numbers
\newcommand{\C}[0]{\mathbb{C}}						% Complex numbers
\newcommand{\N}[0]{\mathbb{N}}						% Natural numbers
\newcommand{\Z}[0]{\mathbb{Z}}						% Integers
\title[Analyticity of quasinormal modes]{Analyticity of quasinormal modes in the Kerr and Kerr-de Sitter spacetimes}
\author{Oliver Petersen}
\address{Department of Mathematics, KTH Royal Institute of Technology, Lindstedtsvägen 25, 11428 Stockholm, Sweden}
\email{oliverlp@kth.se}
\author{Andr\'{a}s Vasy}
\address{Department of Mathematics, Stanford University, Stanford, CA 94305-2125, USA}
\email{andras@math.stanford.edu}
\keywords{Resonances, quasinormal modes, radial points, real analyticity, Killing horizons, Kerr-de Sitter spacetime}
\subjclass[2010]{35L05, 35P25, 58J45, 83C30}
\thanks{The first author is grateful for the support of the Deutsche Forschungsgemeinschaft (DFG, German Research Foundation) – GZ: LI 3638/1-1, AOBJ: 660735 and the Stanford Mathematics Research Center. The second author is grateful for the support of the National Science Foundation under grant number DMS-1953987.}
\begin{document}
\hbadness=100000
\vbadness=100000

\begin{abstract}
We prove that quasinormal modes (or resonant states) for linear wave equations in the subextremal Kerr and Kerr-de Sitter spacetimes are real analytic. 
The main novelty of this paper is the observation that the bicharacteristic flow associated to the linear wave equations for quasinormal modes with respect to a suitable Killing vector field has a stable radial point source/sink structure rather than merely a generalized normal source/sink structure.
The analyticity then follows by a recent result in the microlocal analysis of radial points by Galkowski and Zworski. 
The results can then be recast with respect to the standard Killing vector field.
\end{abstract}

\maketitle

\tableofcontents
\begin{sloppypar}

\section{Introduction}

When studying linear and nonlinear wave equations on black hole spacetimes, such as the Kerr spacetime and Kerr-de Sitter spacetime, quasinormal modes play a prominent role. 
Indeed, for linear equations, within certain limitations corresponding to trapped null-geodesics, solutions have an asymptotic expansion at timelike infinity in quasinormal modes. 
Such expansions, or the corresponding decay or lack thereof statements, have a long history which in the mathematics literature
goes back to S\'a Barreto and Zworski \cite{SBZ1997}, Bony and H\"afner \cite{BH2008}, Dyatlov \cites{D2011, D2012}, Vasy \cite{V2013}, Shlapentokh-Rothman \cite{SR2015}, Hintz and Vasy \cite{HV2015} and Gajic and Warnick \cite{GW2020}. 
In the physics literature the importance of these has been clear even longer, going back to Regge and Wheeler \cite{RW1957}, Vishveshwara \cite{V1970}, Zerilli \cite{Z1970}, Whiting \cite{W1989}, Kodama, Ishibashi and Seto \cite{KIS2000} and others. 
For nonlinear equations the non-decaying quasinormal modes become an obstacle to solvability; for equations with gauge freedom, such as Einstein's equation, it is non-decaying modes that are not `pure gauge' that play an analogous role \cite{HV2018}.

Quasinormal modes are solutions of the homogeneous wave equation which are eigenfunctions of the covariant derivative along appropriate Killing vector fields. 
A key consideration for applications is that for similar covariant eigenfunctions as the forcing (right hand side of the wave equation), there should be a satisfactory Fredholm theory. 
In this case the covariant eigenvalues (resonances) form a discrete set, and the corresponding eigenspaces are finite dimensional. 
Since Fredholm theory is global, this necessitates working relative to Killing vector fields with suitable global behavior.

Recently Galkowski and Zworski \cite{GZ2020} showed that quasinormal modes for non-rotating black holes are real analytic at the horizon; indeed they obtained a substantially stronger microlocal result.
In this paper we generalize their result to the case of rotating black holes whose importance is underlined by their ubiquity. 
Our proof relies crucially on the ability to {\em locally} transform the rotating black hole quasimode problem to the non-rotating one, and thus being able to apply the result of \cite{GZ2020}. 
This transformation is facilitated by locally considering analogues of quasinormal modes with respect to a different Killing vector field that is lightlike on the horizon; this change is very simple in the Kerr and Kerr-de Sitter case as we discuss below, but in fact works in general for non-degenerate Killing horizons under an additional condition, as is also described below. 
While these modes are with respect to a different Killing vector field, we can in fact relate these to the quasinormal modes with respect to the original globally well-behaved vector field to obtain the real analyticity result. 
Indeed, a key feature of the Kerr-de Sitter setting is the presence of two horizons, and the well-behaved Killing vector fields with respect to each of these horizons, while globally well-defined, are ill-behaved at the other horizon. 
Thus, it is of central importance for our approach to be able to work locally near a horizon to obtain the analyticity conclusions.

It is conceivable that the results of \cite{GZ2020} could be proven in the more general setting of \cite{V2013}, which would imply analyticity in the case of rotating black holes.
However, such a proof would be significantly more technically involved than the proof we give here.
The proof of analytic hypoellipticity in \cite{GZ2020} relies on a microlocal normal form of Haber \cite{H2014}, which in turn relies on the relevant Lagrangian (in our case the conormal bundle of the horizon) being radial with respect to the Hamilton vector field.
For rotating black holes, there is more intricate internal dynamics and one could therefore not directly apply the results of Haber.
Indeed, our reduction to \cite{GZ2020} can be considered as a way of `straightening’ the dynamics and thus bringing it to a model form.

Furthermore, it could perhaps be possible to prove analytic hypoellipticity of Keldysh-type operators more explicitly using ODE theory and separation of variables and thereby avoid referring to \cite{GZ2020}.
This was the approach by Lebeau and Zworski in \cite{LZ2019} (see also the work by Zuily in \cite{Zu2017}), where an explicit class of Keldysh type operators where studied, and certain values for the subprincipal symbol had to be excluded.
However, the main purpose of this paper is to reduce the hypoanalyticity of quasinormal modes in Kerr(-de Sitter) spacetimes to the irrotational, i.e.\ Keldysh-type, case.
Applying the ODE approach directly in the rotational case seems cumbersome and it is not clear to us whether it would work.

In the rest of the introduction we describe the precise results in the rotating black hole setting, as well as the generalization to non-degenerate Killing horizons. Then in Section~\ref{sec:coordinates} we discuss geometric aspects of these Killing horizons. 
In Section~\ref{sec:general} we then prove our general local result.
In Section~\ref{sec: proof of first main result} we use these local results to obtain a global result for joint modes of two Killing vector fields on Kerr and Kerr-de Sitter spacetimes. 
Finally, in Section~\ref{sec:improved} we show how these results imply the real analyticity of the quasinormal modes on Kerr and Kerr-de Sitter spacetimes, with modes taken with respect to the standard Killing vector field.

\subsection{Kerr and Kerr-de Sitter spacetime} \label{subsec: Kerr-de Sitter}

Fix three parameters $a \in \R$ and $m, \Lambda \geq 0$, such that the polynomial
\begin{equation} \label{eq: mu}
	\mu(r) := \left(r^2 + a^2\right)\left(1 -\frac{\Lambda r^2}3\right) - 2mr
\end{equation}
has four distinct real roots $r_- < r_C < r_e < r_c$ if $\Lambda > 0$ and two distinct real roots $r_C < r_e$ if $\Lambda = 0$.
The latter condition is equivalent to $\abs a < m$.

Assuming $\Lambda > 0$, the domain of outer communication in the sub-extermal \emph{Kerr-de Sitter spacetime} is given in Boyer-Lindquist coordinates by the real analytic spacetime
\[
	\R_t \times (r_e, r_c)_r \times S^1_\phi \times (0, \pi)_\theta,
\]
with real analytic metric
\begin{equation} \label{eq: g}
\begin{split}
	g
		&= (r^2 + a^2 \cos^2(\theta))\left( \frac{\md r^2}{\mu(r)} + \frac{\md \theta^2}{c(\theta)}\right) \\*
		&\quad + \frac{c(\theta)\sin^2(\theta)}{b^2\left(r^2 + a^2 \cos^2(\theta)\right)}\left(a \md t - \left(r^2 + a^2\right)\md \phi\right)^2 \\*
		&\quad - \frac{\mu(r)}{b^2\left(r^2 + a^2 \cos^2(\theta)\right)}\left(\md t - a \sin^2(\theta)\md \phi\right)^2,
\end{split}
\end{equation}
where
\[
	b := 1 + \frac{\Lambda a^2}3, \quad c(\theta) := 1 + \frac{\Lambda a^2}3 \cos^2(\theta).
\]
The domain of outer communication in the subextremal \emph{Kerr spacetime} is defined analogously, by passing to the limit $\Lambda = 0$.
We set $r_c = \infty$ if $\Lambda = 0$.
The Boyer-Lindquist coordinates could be thought of as spherical coordinates around the black hole, where the black hole is centered at $r = 0$.
Even though they are not defined at the north and south poles $\theta = 0$ and $\pi$, it is straightforward to check that the metric \eqref{eq: g} extends real analytically to 
\[
	M := \R_t \times (r_e, r_c)_r \times S^2_{\phi, \theta}.
\]
Since the metric extends real analytically, so does linear wave equations with a principal symbol given by the metric in these coordinates.
We refer to \cite{HV2018}*{Sec.~3}, for a more thorough discussion of the geometry of Kerr-de Sitter spacetimes.

These coordinates are singular at the roots of $\mu(r)$.
In order to define quasinormal modes, we need to extend this metric real analytically over the future event horizon and the future cosmological horizon, corresponding to the roots $r = r_e$ and $r = r_c$, respectively.
This can be done, for instance, by the following coordinate change:
\begin{align*}
	t_*
		&:= t - \Phi(r), \\
	\phi_*
		&:= \phi - \Psi(r),
\end{align*}
where $\Phi$ and $\Psi$ satisfy
\begin{align*}
		\Phi'(r)
			&= b \frac{r^2 + a^2}{\mu(r)} f(r), \\
		\Psi'(r)
			&= b \frac a{\mu(r)} f(r).
\end{align*}
In the case $\Lambda > 0$, we let $f: (r_e-\de, r_c+\de) \to \R$, for a small $\de > 0$, be a real analytic function such that
\[
	f(r_e) = - 1
\]
and
\[
	f(r_c) = 1.
\]
In the case $\Lambda = 0$, there is no cosmological horizon, so we instead assume that
\[
	\lim_{r \to \infty}f(r) = 1.
\]
The metric \eqref{eq: g} extends real analytically to the manifold 
\[
	M_* 
		:= \R_{t_*} \times (r_e-\de, r_c+\de)_r \times S^2_{\phi_*, \theta},
\]
and is given by
\begin{equation}
\begin{split}
	g_*
		&= (r^2 + a^2 \cos^2(\theta)) \frac{1 - f(r)^2}{\mu(r)}\md r^2 \\*
		&\qquad - \frac 2 b f(r)(\md t_* - a \sin^2(\theta) \md \phi_*)\md r \\*
		&\qquad - \frac{\mu(r)}{b^2\left(r^2 + a^2 \cos^2(\theta)\right)}\left(\md t_* - a \sin^2(\theta) \md \phi_* \right)^2 \\*
		&\qquad + \frac{c(\theta)\sin^2(\theta)}{b^2\left(r^2 + a^2 \cos^2(\theta)\right)}\left(a\md t_* - \left(r^2 + a^2\right) \md \phi_*\right)^2 \\*
		&\qquad + (r^2 + a^2 \cos^2(\theta))\frac{\md \theta^2}{c(\theta)}.
\end{split}
\label{eq: metric at horizons}
\end{equation}
We will throughout the paper assume that $f$ is chosen as in \cite{PV2021}*{Rmk.~1.1}, so that the hypersurfaces
\[
	\{t_* = c \} \times (r_e - \de, r_c + \de)_r \times S^2_{\phi_*, \theta}
\]
are \emph{spacelike}, for all $c \in \R$, and that $\de > 0$ is small enough so that the hypersurfaces
\begin{align*}
	&\R_{t_*} \times \{r = r_e - \de \} \times S^2_{\phi_*, \theta}, \\
	&\R_{t_*} \times \{r = r_c + \de \} \times S^2_{\phi_*, \theta}
\end{align*}
are spacelike.
The two real analytic lightlike hypersurfaces
\begin{align*}
	\H_e^+ 
		&:= \R_{t_*} \times \{r_e\} \times S^2_{\phi_*, \theta}, \\
	\H_c^+
		&:= \R_{t_*} \times \{r_c\} \times S^2_{\phi_*, \theta}
\end{align*}
are called the \emph{future event horizon} and \emph{future cosmological horizon}, respectively.
Note that the real analytic Killing vector fields $\d_t$ and $\d_\phi$, in Boyer-Lindquist coordinates, extend to real analytic Killing vector fields $\d_{t_*}$ and $\d_{\phi_*}$ on $(M_*, g_*)$.

We will consider wave equations on complex tensors.
Fixing $r, s \in \N_0$, let $\T_r^s\U$ denote the complex $(r, s)$-tensors on an open subset $\U \subset M_*$ and let $\n$ denote the Levi-Civita connection acting on $\T_r^s\U$.
We let $C^\infty(\T_r^s\U)$ and $C^\o(\T_r^s\U)$ denote the smooth and real analytic complex tensor fields, respectively.
Let $P$ be a wave operator, i.e.\ is a linear differential operator with principal symbol given by the dual metric, i.e.\
\[
	P = - g^{\a\b} \n_\a \n_\b + \text{lower order terms}.
\]
More precisely, there are complex tensor fields
\begin{align*}
	A
		&: T^*\U \otimes \T_r^s\U \to \T_r^s\U, \\*
	B
		&: \T_r^s\U \to \T_r^s\U,
\end{align*}
such that
\[
	P = \n^*\n + A \circ \n + B.
\]
We consider solutions to wave equations $Pu = f$, where the coefficients $A$ and $B$ are invariant under the Killing vector fields $\d_{t_*}$ and $\d_{\phi_*}$. 
This is a natural assumption for geometric wave equations, where $A$ and $B$ are typically given by curvature expressions.
Our first main result is the following:
\begin{thm} \label{mainthm: K and K-dS}
Let $(M_*, g_*)$ be the subextremal Kerr(-de Sitter) spacetime, extended real analytically over the future event horizon (and future cosmological horizon if $\Lambda > 0$).
Assume that 
\begin{itemize}
	\item $A$ and $B$ are real analytic in $M_*$,
	\item $\L_{\d_{t_*}}A = \L_{\d_{\phi_*}}A = 0$ and $\L_{\d_{t_*}}B = \L_{\d_{\phi_*}}B = 0$ in $M_*$.
\end{itemize}
If $u \in C^\infty(\T_r^sM_*)$ satisfies
\begin{enumerate}[label=(\roman*)]
	\item $Pu \in C^\o(\T_r^sM_*)$, \label{cond: first}
	\item $\L_{\d_{t_*}} u = - i \sigma u$ for some $\sigma \in \C$, \label{cond: second}
	\item $\L_{\d_{\phi_*}} u = - i k u$ for some $k \in \Z$, \label{cond: third}
\end{enumerate}
then $u \in C^\o(\T_r^sM_*)$.
\end{thm}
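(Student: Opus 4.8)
The plan is to prove the theorem by covering $\U$ with three overlapping regions and checking $u\in C^\o$ on each, using that real analyticity is local. The regions are: a two-sided neighborhood of the future event horizon $\H_e^+$; a two-sided neighborhood of the future cosmological horizon $\H_c^+$ (only when $\Lambda>0$); and the complementary region, where $r$ stays bounded away from $r_e$ and $r_c$ and hence $\mu(r)>0$.

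On the complementary region I would argue by ellipticity. By \ref{cond: second} and \ref{cond: third}, together with the $\d_{t_*}$- and $\d_{\phi_*}$-invariance and analyticity of the coefficients, the equation $Pu=f$ with $f:=Pu\in C^\o$ reduces (in Boyer--Lindquist coordinates, where $\d_t=\d_{t_*}$ and $\d_\phi=\d_{\phi_*}$) to an equation $P_{\sigma,k}w=\tilde f$, $\tilde f\in C^\o$, for a tensor field $w$ on the two-dimensional $(r,\theta)$-slice, the second order operator $P_{\sigma,k}$ having real analytic coefficients. Since the $(r,\theta)$-block of $g$ is diagonal, the principal symbol of $P_{\sigma,k}$ equals, up to a positive factor, $\mu(r)\,\xi_r^2+c(\theta)\,\xi_\theta^2$, which is positive definite precisely where $\mu(r)>0$; in particular the photon-sphere trapping — a feature of the null-geodesic flow of the full operator $P$ — is irrelevant here, because $P_{\sigma,k}$ is elliptic, i.e.\ has empty characteristic set, throughout $\{\mu(r)>0\}$. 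Classical interior analytic regularity for elliptic systems (with the standard coordinate resolution at the poles $\theta=0,\pi$, where smoothness of the tensor field $u$ is used) then upgrades $w$, hence $u$, to $C^\o$ on this region.

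The heart of the argument is near a horizon $\H^+_\bullet$, $\bullet\in\{e,c\}$, where $P_{\sigma,k}$ is characteristic: its characteristic set over $\{r=r_\bullet\}$ is the conormal bundle $N^*\H^+_\bullet$. Here I would apply the general local result of Section~\ref{sec:general}, reducing not with respect to $\d_{t_*}$ but with respect to the horizon-generating Killing field $K_\bullet:=\d_{t_*}+\Omega_\bullet\d_{\phi_*}$, where $\Omega_\bullet$ is the angular velocity of $\H^+_\bullet$, which is lightlike on $\H^+_\bullet$ and timelike just to one side. Conditions \ref{cond: second} and \ref{cond: third} imply that $u$ is automatically a $K_\bullet$-eigenfunction, $\L_{K_\bullet}u=-i(\sigma+\Omega_\bullet k)u$, and $P$, $A$, $B$ are $K_\bullet$-invariant, so the hypotheses on the coefficients hold. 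The key computation, carried out with the explicit near-horizon form of $g_*$ from Section~\ref{sec:coordinates}, is that with a $K_\bullet$-adapted time function the rescaled Hamilton flow of the reduced operator has, at $N^*\H^+_\bullet$, a \emph{stable} radial source/sink, rather than the merely \emph{generalized} normal source/sink one sees with $\d_{t_*}$; this is exactly the structure treated by the analytic radial-point estimate of Galkowski--Zworski \cite{GZ2020}. Feeding $u\in C^\infty$, $Pu\in C^\o$, $K_\bullet$-invariance, and analyticity of the coefficients into that estimate yields $u\in C^\o$ on a full two-sided neighborhood of $\{r=r_\bullet\}$. Finally, since the change from the $K_\bullet$-adapted coordinates back to $(t_*,r,\phi_*,\theta)$ is real analytic and $u$ is a $\d_{\phi_*}$-eigenfunction, recasting the conclusion with respect to $\d_{t_*}$ does not affect analyticity.

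Patching the three regions covers all of $r\in(r_e-c_1,r_c+c_2)$, respectively $r\in(r_e-c_1,\infty)$ when $\Lambda=0$, so $u\in C^\o(\T_r^s\U)$. I expect the main obstacle to be the dynamical verification in the previous paragraph — that passing to the horizon-generating Killing field genuinely turns the radial set into a stable source/sink — since this is precisely the hypothesis the analytic microlocal estimates require; the remaining ingredients are either classical (analytic elliptic regularity) or a direct appeal to \cite{GZ2020}.
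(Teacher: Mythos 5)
Your treatment of the horizon neighborhoods (via the horizon-generating Killing field $K_\bullet=\d_{t_*}+\frac{a}{r_\bullet^2+a^2}\d_{\phi_*}$ and the radial-point result of \cite{GZ2020}) and of the domain of outer communication (via analytic elliptic regularity for the reduced operator, including the coordinate issue at the poles) matches the paper's Steps 1 and 2. However, your covering of $\U$ has a genuine gap. You describe the third region as ``the complementary region, where $r$ stays bounded away from $r_e$ and $r_c$ and hence $\mu(r)>0$,'' but this is false for the parts of $\U$ beyond the horizons: $\mu$ has simple roots at $r_e$ and $r_c$ and is \emph{negative} on $(r_e-c_1,r_e)$ and on $(r_c,r_c+c_2)$. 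The local result near a horizon (Theorem \ref{mainthm: general}) only produces some neighborhood $r\in(r_\bullet-\e,r_\bullet+\e)$ with no control on $\e$, so the strips $r\in(r_e-c_1,r_e-\e)$ and $r\in(r_c+\e,r_c+c_2)$ are covered by none of your three regions, and on them the reduced operator is not elliptic: its principal part is $\mu(r)\d_r^2+c(\theta)\d_\theta^2+(\cdots)\d_\phi^2$ with $\mu<0$, i.e.\ a wave operator in which $r$ plays the role of time.

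The missing ingredient is a propagation argument in these exterior strips. The paper handles this by observing that the $t$-reduced equation on $\left(r_e-c_1,r_e\right)_r\times S^2_{\phi,\theta}$ is a linear wave equation with real analytic coefficients on a globally hyperbolic manifold whose Cauchy hypersurfaces are the level sets of $r$; the Cauchy data on $\{r=r_e-\tfrac\e2\}$ are real analytic by the horizon step, so the Cauchy--Kovalevskaya theorem produces a real analytic solution with that data, and uniqueness for the Cauchy problem identifies it with $z$. Without some such hyperbolic propagation of analyticity (Cauchy--Kovalevskaya plus uniqueness, or Holmgren-type arguments), your patching does not reach $r=r_e-c_1$ or $r=r_c+c_2$, and the conclusion $u\in C^\o(\T_r^s\U)$ on the full set $\U$ does not follow. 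The rest of your outline is sound and follows the paper's route.
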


Smooth tensor fields satisfying \ref{cond: second} and \ref{cond: third} in Theorem \ref{mainthm: K and K-dS} and $Pu = 0$ are called \emph{quasinormal modes}.
For functions, these conditions are equivalent to assuming that
\begin{equation} \label{eq: joint modes functions}
	u(t_*, r, \phi_*, \theta) 
		= e^{- i(\sigma t_* + k \phi_*)}v(r, \theta),
\end{equation}
which is perhaps the more common way to express quasinormal modes. 

Combining Theorem \ref{mainthm: K and K-dS} with the Fredholm theory developed by the second author in \cite{V2013} and \cite{V2018} (see also \cites{VZ2000, V2019}) and by both authors in \cite{PV2021}, we deduce our second main result, where we consider quasinormal modes \emph{only} with respect to the Killing vector symmetry
\begin{equation} \label{eq: first new KVF}
	\d_{t_*} + \frac a{r_0^2 + a^2} \d_{\phi_*},
\end{equation}
where $r_0 \in (r_e, r_c)$ is the unique point such that $\mu'(r_0) = 0$, as opposed to modes with respect to both $\d_{t_*}$ and $\d_{\phi_*}$ separately (as in Theorem \ref{mainthm: K and K-dS}).
Concretely, this means that quasinormal modes are supposed to satisfy
\[
	\L_{\d_{t_*} + \frac a{r_0^2 + a^2}\d_{\phi_*}} u = - i \sigma u.
\]
For solutions to linear scalar wave equations on any subextremal Kerr-de Sitter spacetime, there is an asymptotic expansion in these quasinormal modes up to an exponentially decaying term \cite{PV2021}*{Thm.\ 1.5}.
This extends the result of \cite{V2013}, by removing restrictions on the angular momentum.

For the Fredholm theory to go through in the case $\Lambda = 0$, we will need the induced operator on the modes to be a scattering operator with self-adjoint (i.e.\ real) scattering principal symbol near spatial infinity in the sense of Melrose \cite{M1994}. 
Let us use the convention that if $\Lambda = 0$, then $r_0 = \infty$, giving the standard notion of quasinormal modes on the Kerr spacetime.
This amounts to making appropriate decay assumptions on $A$ and $B$:

\begin{thm} \label{mainthm: K and K-dS improved}
Let $(M_*, g_*)$ be the subextremal Kerr(-de Sitter) spacetime, extended real analytically over the future event horizon (and future cosmological horizon if $\Lambda > 0$).
Assume that 
\begin{itemize}
	\item $A$ and $B$ are real analytic in $M_*$,
	\item $\L_{\d_{t_*}}A = \L_{\d_{\phi_*}}A = 0$ and $\L_{\d_{t_*}}B = \L_{\d_{\phi_*}}B = 0$ in $M_*$.
\end{itemize}
If $u \in C^\infty(\T_r^sM_*)$ satisfies
\begin{enumerate}[label=(\roman*)]
	\item $Pu = 0$,
	\item $\L_{\d_{t_*} + \frac{a}{r_0^2 + a^2}\d_{\phi_*}} u = - i \sigma u$ for some $\sigma \in \C$, \label{cond: second improved}
     \item in case $\Lambda = 0$ we also assume that $\Im \sigma \geq 0$ and 
     \begin{itemize}
    		\item if $\Im \sigma > 0$, then assume that $A, B \in \O_\infty\left(r^{-\e}\right)$ and $u|_{t_* = 0} \in \mathcal S'$,
     	\item if $\sigma \in \R \backslash \{0\}$, then assume that $P - P^* \in \O_\infty\left(r^{-1-\e}\right)$ and ${A, B \in \O_\infty(r^{-\e})}$ and that $u|_{t_* = 0} \in r^{\frac12 - \e}L^2$,
     	\item if $\sigma = 0$, then assume that $A \in \O_\infty\left(r^{-1 - \e}\right)$ and $B = \O_\infty\left(r^{-2 - \e}\right)$ and $u|_{t_* = 0} \in \mathcal S'$,
     \end{itemize}
     for some $\e > 0$,
\end{enumerate}
then $u \in C^\o(\T_r^sM_*)$.
\end{thm}

Here we used the notation $\mathcal S'$ for tempered distributions and the notation ${F \in \O_\infty(r^\a)}$ for a complex tensor field $F$ if and only if for all $k \in \N_0$, there is a constant $C_k > 0$, such that
\[
	\abs{\n^k F} \leq C_k r^{\a-k},
\]
where $\abs{\cdot}$ is the positive definite norm on complex tensors induced from the Euclidean metric $\md t^2 + \md r^2 + r^2g_{S^2}$.
The notation $Q \in \O_\infty(r^\a)$ for a differential operator $Q$ means that the coefficients of $Q$ are in $\O_\infty(r^\a)$.

\begin{remark}
In the case when $\Lambda = 0$, one could weaken the assumptions on $u$, $A$ and $B$ at spatial infinity in various ways and still get a Fredholm problem following the arguments of \cite{V2018}.
Indeed, the natural condition on $u|_{t_* = 0}$ in the case $\sigma \in \R \backslash \{0\}$ is formulated microlocally in terms of variable order Sobolev spaces, c.f.\ \cite{V2018}*{Prop.\ 5.28}. 
Moreover, the threshold growth $r^{\frac12}$ could be adjusted depending on $A$ and $B$, to allow for more general coefficients, see \cite{V2018}*{Sec.\ 5.4.8}.
We restrict for simplicity to this setting.
\end{remark}

The restriction $\Im \sigma\geq 0$ for Kerr spacetimes is due to the
lack of a directly applicable Fredholm theory for the Fourier conjugated (in $-t_*$) operators in this case, though alternatives are still available for studying these resonances.
For functions, the condition \ref{cond: second improved} in Theorem \ref{mainthm: K and K-dS improved} is equivalent to assuming that
\[
	u(t_*, r, \phi_*, \theta) = e^{-i\sigma t_*}w(r, \phi_*, \theta),
\]
which should be compared with equation \eqref{eq: joint modes functions} above.

In the special case when $a = 0$, the Kerr(-de Sitter) spacetime simplifies to the Schwarzschild(-de Sitter) spacetime.
In this case, Theorem \ref{mainthm: K and K-dS} and Theorem \ref{mainthm: K and K-dS improved} can be immediately deduced from the framework developed by Galkowski-Zworski in \cite{GZ2020} as follows:
Wave equations for modes with respect to \eqref{eq: first new KVF} reduce in the coordinate system $(t_*, r,  \phi_*, \theta)$ to a Keldysh type operator, exactly of the type studied in \cite{GZ2020}. 
Galkowski-Zworski prove in \cite{GZ2020}*{Thm.~1} (generalizing \cite{Zu2017}*{Thm.~1.3}) the analytic hypoellipticity of such operators, thus proving the real analyticity of quasinormal modes when $a = 0$.
In fact, if $a = 0$, the argument goes through without assuming that the coefficients $A$ and $B$ are invariant under $\d_{\phi_*}$.
Due to the rotation in the Kerr(-de Sitter) spacetime when $a \neq 0$, this argument does not go through immediately.
The key to be able to apply the analytic hypoellipticity theory by Galkowski-Zworski to the case $a \neq 0$ is the main new idea of this paper and is described in the next subsection.

\subsection{Non-degenerate Killing horizons} \label{sec: non-deg horizon}

By checking the formula \eqref{eq: metric at horizons} for the extended metric $g_*$, one observes that the Killing vector field
\begin{equation} \label{eq: new KVF}
	\d_{t_*} + \frac a{r_0^2 + a^2} \d_{\phi_*},
\end{equation}
where $r_0 \in (r_e, r_c)$ is the unique point such that $\mu'(r_0) = 0$, is \emph{lightlike} at the horizons if and only if $a = 0$.
This turns out to be exactly why the modes with respect to \eqref{eq: new KVF} satisfy the useful Keldysh type equation if and only if $a = 0$.
In the Kerr(-de Sitter) spacetime, the Killing vector fields
\begin{align}
	& \d_{t_*} + \frac a{r_e^2 + a^2} \d_{\phi_*}, \label{eq: horizon KVF e} \\*
	\text{and } & \d_{t_*} + \frac a{r_c^2 + a^2} \d_{\phi_*} \text{ (if } \Lambda > 0), \label{eq: horizon KVF c}
\end{align}
are lightlike at the horizons $\H_e^+$ and $\H_c^+$ (if $\Lambda > 0$), respectively.
We show that the mode solutions with respect to \emph{these} Killing vector fields satisfy equations which are almost of Keldysh type.
More precisely, the bicharacteristics associated to the mode equation will have the \emph{radial point structure} assumed by Galkowski-Zworski in their analytic hypoellipticity result \cite{GZ2020}*{Thm.\ 2}.
Now, if $u$ satisfies the assumption of Theorem \ref{mainthm: K and K-dS}, then 
\begin{equation} \label{eq: joint modes}
	\L_{\d_{t_*} + \frac a{r_e^2 + a^2} \d_{\phi_*}} u 
		= - i \left(\sigma + \frac a{r_e^2 + a^2}k\right) u,
\end{equation}
and similarly with $r_e$ replaced by $r_c$.
This shows that $u$ is a mode solution with respect to both Killing vector fields \eqref{eq: horizon KVF e} and \eqref{eq: horizon KVF c}.
The analytic hypoellipticity result by Galkowski-Zworski thus shows that $u$ is real analytic near the horizons $\H^+_e$ and $\H^+_c$ (if $\Lambda > 0$).
This is the main step in the proof of Theorem \ref{mainthm: K and K-dS}, the rest follows by standard propagation of real analyticity for wave equations and analytic hypoellipticity of elliptic equations (c.f.\ \cite{M2002}*{Chapter 4}).

In fact, this method is not specific to the Kerr(-de Sitter) spacetime, but turns out to work for any Killing horizon in any real analytic spacetime, assuming the surface gravity of the Killing horizon is nowhere vanishing.
Assume therefore that $(M,g)$ is a real analytic spacetime, i.e.\ a time-oriented Lorentzian manifold, of dimension $n + 1 \geq 2$, with sign convention $(-, +, \hdots, +)$ and with a real analytic lightlike hypersurface $\H \subset M$.
We assume in particular that the metric $g$ is real analytic.

\begin{definition}\label{def: horizon Killing field}
A real analytic Killing vector field $W$ on $M$, such that $W|_\H$ is lightlike and tangent to $\H$, is called a \emph{horizon Killing vector field} with respect to $\H$.
\end{definition}
\noindent
For each Killing horizon $\H$ and horizon Killing vector field $W$, it is straightforward to check that
\begin{equation} \label{eq: kappa}
	\n_W W|_\H
		= \kappa W|_\H,
\end{equation}
for a real analytic function $\kappa: \H \to \R$ such that $W|_\H \kappa = 0$.
\begin{definition}
Given a Killing horizon $\H$ and a horizon Killing vector field $W$, the \emph{surface gravity} is the real analytic function $\kappa$ defined in \eqref{eq: kappa}.
\end{definition}
\noindent
The key assumption to prove real analyticity of quasinormal modes is that the surface gravity $\kappa$ is nowhere vanishing.
All horizons in Kerr(-de Sitter) spacetimes have surface gravity proportional to $\mu'$ at the horizons, where $\mu$ was defined in \eqref{eq: mu} (c.f.\ \textbf{Step 1} in the proof of Theorem \ref{mainthm: K and K-dS}).
This is the reason our result only applies to subextremal Kerr(-de Sitter) spacetimes, since subextremality makes sure that $\mu'$ does not vanish at the roots of $\mu$, i.e.\ at the horizons.
\begin{remark} \label{rmk: nowhere vanishing surface gravity}
We note in Lemma \ref{le: surface gravity} that if
\begin{equation} \label{eq: constant surface gravity condition}
	\Ric(X, W)|_\H
		= 0
\end{equation}
for all $X \in T\H$ and $\H$ is connected, then the surface gravity $\kappa$ is constant.
In practice, the condition \eqref{eq: constant surface gravity condition} is often satisfied. 
Indeed, it is for example satisfied if the spacetime satisfies the Einstein equation with a cosmological constant of any sign or if the spacetime satisfies the dominant energy condition (c.f.\ \cite{P2018}*{Rmk.~1.16}).
In case $\kappa$ is constant, we get a dichotomy of \emph{non-degenerate} Killing horizons, where $\kappa \neq 0$, and \emph{degenerate} Killing horizons, where $\kappa = 0$.
\end{remark}
As in the previous subsection, we fix $r, s \in \N_0$ and consider linear wave operators on complex $(r, s)$-tensors $\T_r^sM$ and write
\[
	P = \n^*\n + A \circ \n + B
\]
with complex tensor fields
\begin{align*}
	A
		&: T^*M \otimes \T_r^sM \to \T_r^sM, \\
	B
		&: \T_r^sM \to \T_r^sM.
\end{align*}
Our third main result in this paper is the following theorem:
\begin{thm} \label{mainthm: general}
Assume that
\begin{itemize}
\item $(M,g)$ is a real analytic spacetime,
\item $\H \subset M$ is a real analytic lightlike hypersurface,
\item $W$ is a real analytic horizon Killing vector field, 
\item the surface gravity $\kappa$ is nowhere vanishing,
\item $A$ and $B$ are real analytic and $\L_W A = 0$ and $\L_W B = 0$ on $M$.
\end{itemize} 
If $u \in C^\infty(T_r^sM)$ satisfies
\begin{enumerate}[label=(\roman*)]
	\item $Pu \in C^\o(\T_r^sM)$,
	\item $\L_W u = - i \sigma u$ for some $\sigma \in \C$, \label{cond: general second}
\end{enumerate}
then there is an open subset $\U \supset \H$, such that $u \in C^\o(T_r^s\U)$.
\end{thm}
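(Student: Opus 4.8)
The plan is to localize near the horizon $\H$ and reduce the mode equation to the radial-point setting of Galkowski--Zworski \cite{GZ2020}*{Thm.~2}. First I would use the non-degeneracy hypothesis: since $\kappa \neq 0$, the flow of $W$ near $\H$ behaves (after possibly restricting to a suitable one-sided or two-sided neighbourhood) like the flow near a non-degenerate Killing horizon, and I would introduce adapted real analytic coordinates $(v, \rho, y)$ in which $W = \d_v$, $\H = \{\rho = 0\}$, and $g$ takes a normal form in which the $\d_\rho$ direction is the transverse null direction. Such coordinates exist by the geometry of non-degenerate Killing horizons discussed in Section~\ref{sec:coordinates}; the surface gravity relation $\n_W W|_\H = \kappa W|_\H$ from Lemma~\ref{le: surface gravity} is exactly what guarantees the clean exponential structure of the flow and hence the normal form. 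Substituting the ansatz coming from \ref{cond: general second}, $\L_W u = -i\sigma u$, the operator $P = \n^*\n + A\circ\n + B$ becomes, in these coordinates, a real analytic differential operator $P_\sigma$ acting on $(r,s)$-tensor components over the quotient by the $W$-flow, with the crucial feature that its principal symbol vanishes on $\H$ in the $\rho$-direction to the right order — i.e. $P_\sigma$ is of the ``almost Keldysh'' form whose rescaled Hamilton flow has a radial source/sink at the conormal to $\H$.

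The second step is to verify that the rescaled bicharacteristic flow of $P_\sigma$ genuinely has the \emph{radial point} structure required in \cite{GZ2020}*{Thm.~2}, not merely a generalized normal source/sink. This is the conceptual heart of the argument and I expect it to be the main obstacle: one must compute the linearization of the Hamilton flow at the radial set (the conormal bundle of $\H$) and check that the stable/unstable eigenvalues have the sign and nondegeneracy properties that make the point a true radial source or sink. The input that makes this work is that $W$ is lightlike and tangent to $\H$ with $\kappa \neq 0$: lightlikeness forces the conormal direction to be a genuine characteristic direction of $P_\sigma$ at $\H$, and nonvanishing surface gravity gives the nonzero radial eigenvalue. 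The dominant energy condition enters here (via Lemma~\ref{le: surface gravity} and the structure of $\n_W W|_\H$) to ensure $\kappa$ is a genuine constant along $\H$, so the radial set structure is uniform; without it the subprincipal/lower-order terms $A, B$ (which are $W$-invariant by hypothesis, hence descend) could in principle spoil the threshold condition, so I would track the subprincipal symbol carefully and confirm the threshold inequality of Galkowski--Zworski holds regardless of the fixed value of $\sigma \in \C$ and the tensor weights $r, s$ — the latter affects only bundle endomorphism terms, which are lower order and do not touch the radial structure.

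With the radial point structure established, \cite{GZ2020}*{Thm.~2} applies directly and yields that $u$ is real analytic in a neighbourhood of the radial set, i.e. microlocally near $N^*\H$; combined with the hypothesis $Pu \in C^\o$ this upgrades to real analyticity of $u$ in a full open neighbourhood $\U$ of $\H$ in the relevant variables. I would then push this back through the change of coordinates (which is real analytic, hence preserves $C^\o$) and reconstruct $u$ on $M$ from its descended version using $\L_W u = -i\sigma u$ and the real analyticity of the flow of $W$: since $u(\exp(vW)(p)) = e^{-i\sigma v} u(p)$ and both the flow and $p \mapsto u(p)$ are real analytic near $\H$, $u$ is real analytic on an open $\U \supset \H$. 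The final bookkeeping — that the neighbourhood obtained is genuinely open in $M$ and contains all of $\H$, and that the reduction to the quotient is valid on tensors (handling the $W$-derivatives of the tensor components versus the Lie derivative condition) — is routine but should be stated; I would isolate it as a short lemma in Section~\ref{sec:general}.
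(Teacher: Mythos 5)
Your proposal follows essentially the same route as the paper: adapted coordinates near $\H$ with $W=\d_{x_0}$ and the metric in normal form (Proposition \ref{prop: suitable coordinates}), reduction of the mode equation to a system on the quotient by the $W$-flow, verification that the characteristic set over $\H$ is the conormal bundle with $H_\p$ there parallel to $\xi\cdot\d_\xi$ with coefficient $-2\kappa(\xi_1)^2\neq 0$, and then \cite{GZ2020}*{Thm.~2} plus elliptic analytic hypoellipticity and the $W$-flow to propagate analyticity to a full neighbourhood of $\H$. The only small inaccuracy is your worry about a ``threshold inequality'': \cite{GZ2020}*{Thm.~2} imposes hypotheses only on the principal symbol (no threshold condition on subprincipal or lower-order terms), so no tracking of $A$, $B$ or $\sigma$ beyond $W$-invariance is needed there.
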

Note that all assumptions in Theorem \ref{mainthm: general} are local.
As explained above, we will apply Theorem \ref{mainthm: general} with
\[
	W = \d_{t_*} + \frac a{r_e^2 + a^2} \d_{\phi_*},
\]
and with
\[
	W = \d_{t_*} + \frac a{r_c^2 + a^2} \d_{\phi_*},
\]
if $\Lambda > 0$, which will prove the main step in Theorem \ref{mainthm: K and K-dS} and Theorem \ref{mainthm: K and K-dS improved}, namely the real analyticity near the horizons.

Our methods require the existence of a horizon Killing vector field.
This allows to reduce the wave equation for the modes to the useful (almost) Keldysh form.
Surprisingly, a horizon Killing vector field is quite often guaranteed to exist in vacuum spacetimes with horizons.
Proving the existence of a horizon Killing vector field has been the central tool in various black hole uniqueness results for the subextremal Kerr spacetime. 
This line of argument was pioneered by Hawking, who showed that stationary real analytic vacuum black holes with a non-degenerate event horizon necessarily admit a horizon Killing vector field \cites{H1972, HE1973}.
This result was later generalized to higher dimensional analytic vacuum black holes by Hollands-Ishibashi-Wald \cite{HIW2007} and Moncrief-Isenberg \cite{MI2008}.

There is an analogous problem for compact (also called cosmological) Cauchy horizons in vacuum spacetimes.
A conjecture by Moncrief and Isenberg \cite{MI1983} states that any compact Cauchy horizon in a vacuum spacetime admits a horizon Killing vector field.
The existence of a horizon Killing vector field in that setting would prove that vacuum spacetimes with compact Cauchy horizons are non-generic, which would support the Strong Cosmic Censorship Conjecture in cosmology.
During the last decades, Moncrief and Isenberg have made important progress on their conjecture, assuming that the spacetime metric is real analytic \cites{MI1983,IM1985,MI2018}. 

Remarkably, the existence of a horizon Killing vector field does often not even rely on the real analyticity of the spacetime metric. 
Alexakis, Ionescu and Klainerman proved in \cite{AIK2010} (see also \cite{IonescuKlainerman2013}) an analogue of Hawking's theorem, showing the existence of a horizon Killing vector field in a neighbourhood of any bifurcate horizon in \emph{smooth} vacuum spacetimes, as opposed to real analytic.
This result has been central in their approach to prove uniqueness of subextremal Kerr black holes \cites{AIK2010_2, AIK2014} in the smooth setting.
For compact Cauchy horizons in smooth vacuum spacetimes, as opposed to real analytic, a horizon Killing vector field has been shown to exist by Petersen in \cite{P2019}, assuming that the surface gravity is a non-zero constant (extending \cites{FRW1999, P2018, PR2018}).
The assumption on constant surface gravity has recently been shown to be equivalent to a weak non-degeneracy assumption for compact Cauchy horizons in vacuum spacetimes, see \cite{BR2021} and \cite{GM2021}.

Though the above mentioned results mainly concern vacuum spacetimes without cosmological constant, one expects them to extend to the case of positive cosmological constant and electro-vacuum spacetimes as well (c.f.\ \cite{R2000}).
In conclusion, studying wave equations close to non-degenerate horizons (bifurcate or constant non-zero surface gravity), one might in quite wide generality be able to pass to modes with respect to the horizon Killing vector field and analyze the (almost) Keldysh type equation they are known to satisfy by the arguments in this paper.

\section{Suitable coordinates near non-degenerate Killing horizons}\label{sec:coordinates}

The first step towards proving Theorem \ref{mainthm: general} is to define appropriate coordinates near the lightlike hypersurface $\H$:

\begin{prop} \label{prop: suitable coordinates}
Assume the same as in Theorem \ref{mainthm: general}.
Then, for any $p \in \H$, there is a real analytic coordinate system $(x_0, \hdots, x_n)$, defined on an open neighborhood $\U \ni p$, such that
\begin{itemize}
\item $\d_{x_0} = W|_\U$,
\item $x_1$ is a defining function for $\U \cap \H$ (i.e.\ $\U \cap \H = x_1^{-1}(0)$ and $\md x_1|_{\U \cap \H} \neq 0$), 
\item the metric $g$ expressed in these coordinates satisfies
\begin{equation} \label{eq: metric at r= 0}
	g|_{x_1 = 0} 
		= 
	\begin{pmatrix}
		0 & 1 & 0 & \hdots & 0 \\
		1 & 0 & 0 & \hdots & 0 \\
		0 & 0 & g_{22}|_{x_1 = 0}  & \hdots & g_{2 n}|_{x_1 = 0}  \\
		\vdots & \vdots & \vdots & \ddots & \vdots \\
		0 & 0 & g_{n2}|_{x_1 = 0} & \hdots & g_{nn}|_{x_1 = 0}  \\
	\end{pmatrix}, 
\end{equation}
where
\begin{equation} \label{eq: submetric at r= 0}
	\begin{pmatrix}
		g_{22}|_{x_1 = 0}  & \hdots & g_{2 n}|_{x_1 = 0}  \\
		\vdots & \ddots & \vdots \\
		g_{n2}|_{x_1 = 0}  & \hdots & g_{nn}|_{x_1 = 0}  \\
	\end{pmatrix},
\end{equation}
is positive definite and
\[
	\d_1 g_{00}|_{x_1 = 0} = - 2\kappa,
\]
where $\kappa$ is the (nowhere vanishing) surface gravity.
\end{itemize}
\end{prop}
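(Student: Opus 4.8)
The plan is to construct the coordinates in two stages: first straighten out the Killing field $W$ and the hypersurface $\H$ simultaneously, then modify the remaining transverse coordinate so that the metric takes the Gaussian-null-like normal form \eqref{eq: metric at r= 0} on $\H$. For the first stage, fix $p \in \H$ and choose a local real analytic hypersurface $S \subset \H$ through $p$ of dimension $n-1$ that is transverse within $\H$ to the flow of $W$ (this uses that $W|_\H \neq 0$, which holds because $W|_\H$ is lightlike, hence nonzero). Using the flow $\Phi^{W}_{x_0}$ of $W$, coordinatize a neighbourhood of $p$ in $\H$ by $(x_0, x_2, \ldots, x_n)$ where $(x_2,\ldots,x_n)$ are real analytic coordinates on $S$. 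Since $W$ is real analytic, this is a real analytic chart, and by construction $\d_{x_0} = W$ along $\H$. Next, pick a real analytic vector field $N$ defined near $p$, transverse to $\H$, with $[W,N]=0$ — for instance, extend a transverse vector at $p$ and then average/transport along the $W$-flow, or simply take $N$ to be $\d$ of any transverse coordinate in an auxiliary chart and note that since $\L_W$ of the ambient structure can be arranged, one can correct $N$ by flowing; concretely, choosing an initial transverse field on $S$ and Lie-transporting by $W$ produces $N$ with $\L_W N = 0$. Flowing off $\H$ by $N$ for parameter $x_1$ gives a real analytic coordinate system $(x_0,x_1,\ldots,x_n)$ on a neighbourhood $\U$ of $p$ with $\d_{x_1}=N$, with $\U\cap\H = \{x_1=0\}$, and — because $[W,N]=0$ and $W=\d_{x_0}$ on $\H$ is Lie-transported along $N$ — with $\d_{x_0}=W$ on all of $\U$. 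This gives the first two bullet points.

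For the metric normal form on $\{x_1=0\}$: since $W=\d_{x_0}$ is lightlike and tangent to $\H$, we have $g_{00}|_{x_1=0} = g(W,W)|_\H = 0$ and $g_{0j}|_{x_1=0} = g(W,\d_{x_j})|_\H = 0$ for $j=2,\ldots,n$ because $W|_\H$ is lightlike and $\H$ is a null hypersurface, so $W|_\H$ is (up to scale) the unique null normal and is $g$-orthogonal to all of $T\H$. This already produces the first row and column of \eqref{eq: metric at r= 0} except for the entry $g_{01}|_{x_1=0}=g(W,N)|_\H$, which is nonzero since $N$ is transverse to the null hypersurface $\H$ and $W|_\H$ spans the normal direction; after rescaling $N$ (i.e.\ rescaling the coordinate $x_1$ by a real analytic function of $(x_0,\ldots,x_n)|_{x_1=0}$, which is harmless) we may assume $g_{01}|_{x_1=0}=1$. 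The lower-right block $(g_{ij})_{i,j\ge 2}|_{x_1=0}$ is the restriction of $g$ to a complement of $W|_\H$ inside $T\H$, which is positive definite because $g$ is Lorentzian and $W|_\H$ is null: the degenerate direction of $g|_{T\H}$ is exactly $\mathbb{R}W|_\H$. The entry $g_{11}|_{x_1=0}=g(N,N)|_\H$ need not vanish yet; this is the one place where a genuine modification is required.

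To kill $g_{11}$ on $\H$ and to arrange $\d_1 g_{00}|_{x_1=0} = -2\kappa$, I replace $x_1$ by a new transverse coordinate $\tilde x_1 = x_1 + \tfrac12 h(x_0,\ldots,x_n)\, x_1^2 + O(x_1^3)$ — equivalently, replace the geodesic-off-$\H$ construction by flowing along a null geodesic vector field: choose $N$ on $\H$ to be the unique null vector field with $g(W,N)=1$ and $g(N,N)=0$ that is null-orthogonal to $T\H / \mathbb{R}W$ (the affinely parametrized transverse null generator), and let $x_1$ be the affine parameter along the null geodesics it generates. Then $g_{11}=g(\d_{x_1},\d_{x_1})$ vanishes identically (geodesic, null), $g_{01}=g(W,\d_{x_1})$ is constant $=1$ along each geodesic, and along $\H$ one has $g_{0j}=0$ as before; a short computation shows $\partial_1 g_{1j}|_{\H}=0$ as well, so this is a genuine Gaussian null coordinate system. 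Finally, $\d_1 g_{00}|_{x_1=0}$ is computed from $\tfrac12\d_1 g_{00} = \tfrac12 N g(W,W) = g(\n_N W, W)$; using that $W$ is Killing, $g(\n_N W, W) = -g(\n_W W, N)$, and by Remark \ref{rmk: surface gravity} (Lemma \ref{le: surface gravity}) $\n_W W|_\H = \kappa W|_\H$, so $g(\n_N W,W)|_\H = -\kappa\, g(W,N)|_\H = -\kappa$, giving $\d_1 g_{00}|_{x_1=0} = -2\kappa$ as claimed, with $\kappa$ the constant nonzero surface gravity. Real analyticity is preserved throughout since all objects ($g$, $W$, the null generator $N$, the geodesic flow) are real analytic, and the ODE/flow constructions have real analytic solutions by the Cauchy–Kovalevskaya / analytic ODE theorem.

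I expect the main obstacle to be bookkeeping in the last step: verifying that the affinely-parametrized-null-geodesic choice of $x_1$ really does simultaneously produce $g_{11}|_{x_1=0}=0$, keep $g_{01}|_{x_1=0}=1$, keep $g_{0j}|_{x_1=0}=0$, and keep the block \eqref{eq: submetric at r= 0} positive definite — i.e.\ that the two normalizations (metric normal form on $\H$ and the correct transverse derivative of $g_{00}$) are compatible and do not fight each other. The key inputs that make it work are (a) $W|_\H$ null and normal to the null hypersurface $\H$, which forces $g_{00}|_\H = g_{0j}|_\H = 0$ and makes the transverse block positive definite, and (b) the surface gravity identity from Lemma \ref{le: surface gravity}, which converts $\d_1 g_{00}|_\H$ into $-2\kappa$ via the Killing equation. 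Everything else is standard flow-box and Gaussian-null-coordinate construction, carried out in the real analytic category.
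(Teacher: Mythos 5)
Your construction is essentially the paper's: a flow box for $W$ on $\H$, the canonically normalized transverse null vector field $L$ (null, orthogonal to $\d_2,\dots,\d_n$, with $g(L,W)=1$), the affine parameter of its geodesics as $x_1$, and the surface-gravity identity from Lemma \ref{le: surface gravity} combined with the Killing equation to get $\d_1 g_{00}|_{x_1=0}=-2\kappa$. The one point you only gesture at — that $\d_{x_0}=W$ persists \emph{off} $\H$ once you switch to the geodesic transverse coordinate, i.e.\ that $[\d_1,W]=0$ — is where the paper spends most of its effort, verifying via the Killing equation that $\n_{\d_1}W|_{x_1=0}=\n_{\d_1}\d_0|_{x_1=0}$ so that the $W$-flow permutes the transverse geodesics; your appeal to Lie-transport applies to your first, generic choice of $N$ but must be re-justified for the geodesic one.
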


\begin{remark}
These coordinates are essentially the \emph{Gaussian null coordinates} introduced by Moncrief-Isenberg in \cite{MI1983}, with the extra condition that $\d_0$ is the horizon Killing vector field restricted to an open neighborhood.
(This is precisely what is obtained \emph{a posteriori} after the construction of the horizon Killing vector field in \cite{MI1983}.)
\end{remark}

\begin{example} \label{ex: Misner}
The simplest example of a spacetime satisfying all our assumptions is $M = \R^{n+1}$, equipped with the real analytic Misner metric
\[
	g = 2\md x_1\md x_0 + x_1 \md x_0^2 + \sum_{j = 2}^n (\md x^j)^2,
\]
where $\H = \{x_1 = 0\}$, $W = \d_0$ and surface gravity $\kappa = - \frac12$.
\end{example}

\begin{example} \label{ex: Kerr-de Sitter}
In fact, even in the subextremal Kerr(-de Sitter) spacetime, one can easily choose coordinates which almost satisfy the conditions in Proposition \ref{prop: suitable coordinates}, with one (insignificant) difference. 
To define these, it will be convenient to introduce an intermediate coordinate system, which will only be defined near one of the horizons.
Let us start with the event horizon.
In terms of Boyer-Lindquist coordinates, define
\begin{align*}
	\tilde t_*
		&:= t - \tilde \Phi(r), \\
	\tilde \phi_*
		&:= \phi - \tilde \Psi(r),
\end{align*}
where $\tilde \Phi$ and $\tilde \Psi$ satisfy
\begin{align*}
		\tilde \Phi'(r)
			&= - b \frac{r^2 + a^2}{\mu(r)}, \\
		\tilde \Psi'(r)
			&= - b \frac a{\mu(r)},
\end{align*}
near $r = r_e$.
This commonly used analytic coordinate system $(\tilde t_*, r, \tilde \phi_*, \theta)$ is defined near the future event horizon.
Choose now the coordinates
\begin{align*}
	x_0
		&= \tilde t_*, \\
	x_1
		&= r - r_e, \\
	x_2
		&= \tilde \phi_* - \frac a {r_e^2 + a^2}\tilde t_*, \\
	x_3
		&= \theta,
\end{align*}
from which we get
\[
	\d_{x_0} = \d_{\tilde t_*} + \frac a{r_e^2 + a^2}\d_{\tilde \phi_*}.
\]
Defining
\[
	\psi(x_3) := b \frac{r_e^2 + a^2}{r_e^2 + a^2 \cos^2(x_3)},
\]
one easily computes that the metric $g_*$ at the future event horizon is given by
\begin{equation} \label{eq: KdS metric at r= 0}
	\psi g_*|_{x_1 = 0} 
		=
	\begin{pmatrix}
		0 & 1 & 0 & 0 \\
		1 & 0 & 0 & 0 \\
		0 & 0 & {g_*}_{22}|_{x_1 = 0} & 0  \\
		0 & 0 & 0 & {g_*}_{33}|_{x_1 = 0} \\
	\end{pmatrix},
\end{equation}
in these coordinates, where ${g_*}_{22}|_{x_1 = 0}, {g_*}_{33}|_{x_1 = 0} > 0$.
Moreover, we have
\[
	\d_1 (\psi {g_*}_{00})|_{x_1 = 0} = - 2 \kappa_e,
\]
where the surface gravity $\kappa_e$ is given by
\[
	\kappa_e 
		= \frac{\mu'(r_e)}{2 b \left(r_e^2 + a^2\right)} 
		> 0,
\]
c.f.\ the computation in Step 1 in the proof of Theorem \ref{mainthm: K and K-dS}.
These coordinates coincide with the coordinates in Proposition \ref{prop: suitable coordinates}, up to the multiplication by the positive conformal factor $\psi$. 
Since conformal changes of the geometry only reparametrize the lightlike geodesics, $\psi$ is irrelevant for the analysis.
However, it is of course natural to construct the coordinates in Proposition \ref{prop: suitable coordinates} without a conformal factor. 
This would here correspond to changing $x_1$ to $\tilde x_1$ by solving the geodesic equation
\[
	\n_{\d_{\tilde x_1}}\d_{\tilde x_1} = 0, \quad \d_{\tilde x_1}|_{x_1 = 0} = \psi \d_{x_1}|_{x_1 = 0},
\]
and changing the remaining coordinates $x_j$ to $\tilde x_j$ by demanding that
\[
	[\d_{\tilde x_1}, \d_{\tilde x_j}] = 0, \quad \d_{\tilde x_j}|_{x_1 = 0} = \d_{x_j}|_{x_1 = 0}.
\]
In this new coordinate system, we get precisely the conditions in Proposition \ref{prop: suitable coordinates}.
One analogously constructs similar coordinates near the future cosmological horizon.
\end{example}

\begin{proof}[Proof of Proposition \ref{prop: suitable coordinates}]
Let first $(x_0, x_2, \hdots, x_n)$ be real analytic coordinates in an open neighborhood $\V \subseteq \H$ of $p$, such that
\[
	\d_0 = W|_\V.
\]
Now let $L$ be the unique lightlike real analytic vector field (transversal to $\H$) along $\V$ such that
\begin{equation} \label{eq: def L}
	g(L, L)|_\V = g(L, \d_j)|_\V  = 0, \quad g(L, \d_0)|_\V = 1
\end{equation}
for $j = 2, \hdots, n$.
Define now the real analytic coordinate $x_1$ in an open neighborhood $\U \subset M$ of $p$, such that $\V = \U \cap \H$, by solving the geodesic equation in direction of $L$, i.e.\ we solve
\begin{align*}
	\n_{\d_1}\d_1
		&= 0, \\
	\d_1|_\V
		&= L
\end{align*}
and set $x_1 = 0$ at $\H$.
It follows that $x_1$ is a defining function for $\U \cap \H$.
We also extend the other coordinates to $\U$ by demanding that
\[
	[\d_1, \d_0] = [\d_1, \d_j] = 0
\]
in $\U$, for $j = 2, \hdots, n$.
The inverse function theorem for real analytic functions implies that this forms a coordinate system.

We now show that $\d_0 = W|_\U$.
Recall first that $\d_0|_{x_1 = 0} = W|_{x_1 = 0}$.
By uniqueness of ODE and since $[\d_0, \d_1] = 0$, it suffices to show that $[\d_1, W|_\U] = 0$.
This is equivalent to $W$ leaving the integral curves of $\d_1$ invariant.
Since $W$ is a Killing vector field and the integral curves of $\d_1$ are geodesics, it thus suffices to prove that the initial velocity $\d_1|_{x_1 = 0}$ of the geodesics are invariant under $W$, i.e.\ that
\[
	[W, \d_1]|_{x_1 = 0} = \left( \n_W\d_1 - \n_{\d_1}W \right)|_{x_1 = 0} = 0.
\]
Since $W|_{x_1 = 0} = \d_0|_{x_1 = 0}$, it follows that $\n_W\d_1|_{x_1 = 0} = \n_{\d_0}\d_1|_{x_1 = 0}$ and therefore, since $[\d_0, \d_1]|_{x_1 = 0} = 0$, it suffices to prove that $\n_{\d_1}W|_{x_1 = 0} = \n_{\d_1}\d_0|_{x_1 = 0}$.
Using that $W$ is a Killing vector field, we observe that
\begin{align*}
	g(\n_{\d_1} W, \d_1)
		&= \frac12 \L_W g(\d_1, \d_1) \\
		&= 0.
\end{align*}
We also have
\begin{align*}
	g(\n_{\d_1} \d_0, \d_1)|_{x_1 = 0}
		&= g([\d_1, \d_0], \d_1)|_{x_1 = 0} - g(\n_{\d_0} \d_1, \d_1)|_{x_1 = 0} \\
		&= - \frac12 \d_0 g(\d_1, \d_1)|_{x_1 = 0} \\
		&= 0,
\end{align*}
hence
\[
	g(\n_{\d_1} W, \d_1)|_{x_1 = 0}
		= 0 
		= g(\n_{\d_1} \d_0, \d_1)|_{x_1 = 0}.
\]
Moreover, for all $j = 0, 2, \hdots, n$, we have
\begin{align*}
	g(\n_{\d_1}W, \d_j)|_{x_1 = 0}
		&= \L_Wg(\d_1, \d_j)|_{x_1 = 0} - g(\n_{\d_j} W, \d_1)|_{x_1 = 0} \\*
		&= - g(\n_{\d_j} \d_0, \d_1)|_{x_1 = 0} \\*
		&= - g(\n_{\d_0} \d_j, \d_1)|_{x_1 = 0} \\*
		&= - \d_0 g(\d_j, \d_1)|_{x_1 = 0} + g(\d_j, \n_{\d_0}\d_1)|_{x_1 = 0} \\*
		&= g(\n_{\d_1} \d_0, \d_j)|_{x_1 = 0},
\end{align*}
where we have used that $g(\d_j, \d_1)|_{x_1 = 0}$ is constant by \eqref{eq: def L}.
This shows that 
\[
	\n_{\d_1} W|_{x_1 = 0} = \n_{\d_1} \d_0 |_{x_1 = 0}.
\]
Taken together, this shows our claim that $\d_0 = W|_\U$.

It is now clear that the metric has the form \eqref{eq: metric at r= 0} at $x_1 = 0$ and that the part \eqref{eq: submetric at r= 0} is positive definite.
Using \eqref{eq: kappa}, we compute
\begin{align*}
	\d_1 g_{00}|_{x_1 = 0}
		&= 2 g(\n_{\d_1}\d_0, \d_0)|_{x_1 = 0} \\*
		&= 2\d_0 g(\d_1, \d_0)|_{x_1 = 0} - 2 g(\d_1, \n_{\d_0}\d_0)|_{x_1 = 0} \\*
		&= - 2 g(\d_1, \n_WW)|_{x_1 = 0} \\*
		&= - 2 \kappa g(\d_1, W)|_{x_1 = 0} \\*
		&= - 2 \kappa.
\end{align*}
This finishes the proof.
\end{proof}

\section{Real analyticity near general horizons}\label{sec:general}

The goal of this section is to prove Theorem \ref{mainthm: general}.
In order to explain the idea, let us start by discussing the following example:
\begin{example} \label{ex: Misner modes}
The d'Alembert operator in Example \ref{ex: Misner} is given by
\[
	\Box = \d_1 \left(x_1 \d_1 - 2\d_0\right) - \sum_{j=2}^n \d_j^2.
\]
The condition \ref{cond: general second} in Theorem \ref{mainthm: general} is that
\[
	u(x_0, \hdots, x_n) = e^{- i \sigma x_0} v(x_1, \hdots, x_n).
\]
Such a mode solution to $\Box u = 0$ must satisfy the reduced equation
\[
	\d_1(x_1 \d_1 v) - \sum_{j=2}^n \d_j^2 v + 2 i\sigma \d_1 v= 0.
\]
This is a Keldysh type equation on the quotient space
\[
	\R^{n+1}/{\sim} = \R^n,
\]
and \cite{GZ2020}*{Thm.\ 1} implies that $v$ and hence $u$ is real analytic.
\end{example}

The proof of Theorem \ref{mainthm: general} is a generalization of the argument in Example \ref{ex: Misner modes}:

\begin{proof}[Proof of Theorem \ref{mainthm: general}]
Shrinking $\U$ if necessary, we can write the coordinates from Proposition \ref{prop: suitable coordinates} as
\[
	(x_0, \hdots, x_n): \U \to (-\e, \e)_{x_0} \times (-\de, \de)_{x_1} \times K_{x_2, \hdots, x_n} \subset \R^{n+1},
\]
where $K \subset \R^{n-1}$ is an open relatively compact subset and $\e, \de > 0$ are sufficiently small.
Since
\[
	\d_0 = W|_\U
\]
is a Killing vector field, we would like to eventually reduce $P$ in the $x_0$-direction.
For this, we first set
\[
	\V := \U/{\sim},
\]
where $p \sim q$ if and only if
\[
	(x_1(p), \hdots, x_n(p)) = (x_1(q), \hdots, x_n(q)),
\]
i.e.\ only $x_0(p)$ and $x_0(q)$ may differ.
The induced coordinates on the quotient space are
\[
	(x_1, \hdots, x_n): \V \to (-\de, \de)_{x_1} \times K_{x_2, \hdots, x_n},
\]
i.e.\ we have \enquote{dropped} the $x_0$-coordinate.

The complex $(r, s)$-tensors on $\U$ are complex linear combinations of basis elements of the form
\[
	e_\I := \d_{i_0} \otimes \hdots \otimes \d_{i_r} \otimes \md x_{j_0} \otimes \hdots \otimes \md x_{j_s},
\]
where $\I := (i_1, \hdots, i_r, j_1, \hdots, j_s)$, and we of course have
\[
	\L_{\d_0} e_\I = 0.
\]
Let us define $f := Pu$ and write
\[
	u = \sum_{\I}u_\I e_\I, \quad f = \sum_{\I}f_\I e_\I.
\]
Since $\d_0 = W|_\U$ is a Killing vector field, we note that
\[
	[\L_{\d_0}, \n] = 0,
\]
and by the assumption in Theorem \ref{mainthm: general}, we know that
\[
	\L_{\d_0}A = \L_W A = 0, \quad \L_{\d_0}B = \L_W B = 0.
\]
It thus follows that the wave equation $Pu = f$, restricted to the subset $\U$, can be written as a linear system of scalar wave equations
\begin{equation} \label{eq: system of equations}
	\sum_{\a,\b = 0}^n -g^{\a \b} \d_\a \d_\b u_\I + \sum_{\gamma = 0}^n\sum_{J}\mathcal A_{\I, \gamma}^\J\d_\gamma u_\J + \sum_\J \mathcal B_\I^\J u_\J = f_\I,
\end{equation}
for each $\I := (i_1, \hdots, i_r, j_1, \hdots, j_s)$, where the coefficients
\[
	g^{\a \b}, \quad \mathcal A_{\I, \gamma}^\J, \quad \mathcal B_\I^\J
\]
are \emph{independent} of $x_0$.
By the mode condition \ref{cond: general second}, we note that
\[
	\d_0 u_\I = - i \sigma u_\I, \quad \d_0 f_\I = - i \sigma f_\I,
\]
which implies that 
\[
	u_\I = e^{- i \sigma x_0} u_\I|_{x_0 = 0}, \quad f_\I = e^{- i \sigma x_0} f_\I|_{x_0 = 0}.
\]
Inserting this into \eqref{eq: system of equations} gives a new system of equations
\[
	\sum_{i,j = 1}^n - g^{ij} \d_i \d_j u_\I|_{x_0 = 0} + \sum_{k = 1}^n\sum_J \mathcal C_{\I, k}^\J\d_k u_\J|_{x_0 = 0} + \sum_\J \mathcal D_\I^\J u_\J|_{x_0 = 0}
		= f_\I |_{x_0 = 0},
\]
where the new coefficients $\mathcal C_{\I, k}^\J$ and $\mathcal D_\I^\J$ are independent of $x_0$.
Note also that the sums now exclude derivatives in $x_0$.
We have thus shown that the equation $Pu = f$ is equivalent to a system of equations 
\[
	\hat P u|_{x_0 = 0} = f|_{x_0 = 0}
\]
on the quotient space
\[
	\V = \U/{\sim},
\]
where the principal symbol of $\hat P$ is 
\begin{equation} \label{eq: principal symbol}
	\p(x_1, \hdots, x_n, \xi_1, \hdots, \xi_n) \Id,
\end{equation}
where $\Id$ is the identity matrix and
\begin{equation*}
	\p(x_1, \hdots, x_n, \xi_1, \hdots, \xi_n) 
		:= \sum_{i,j = 1}^n g(x_1, \hdots, x_n)^{ij}\xi_i \xi_j
\end{equation*}
for any $(x_1, \hdots, x_n, \xi_1, \hdots, \xi_n) \in T^* \V$.

This is where the information about the metric $g$ in Proposition \ref{prop: suitable coordinates} becomes useful.
We claim that first that
\begin{equation} \label{eq: conormal bundle}
	\{\p = 0\} \cap \{x_1=0\} = N^*\{x_1=0\},
\end{equation}
where $ N^*\{x_1=0\}$ denotes the conormal bundle of the horizon $\{x_1=0\}$.
In order to compute the components $g^{ij}$, for $i,j = 1, \hdots, n$, we first need to invert the \emph{full} matrix of metric components.
By Proposition \ref{prop: suitable coordinates}, we conclude that 
\begin{equation} \label{eq: inverse metric at x_1= 0}
	g^{\a\b}|_{\U \cap \{x_1 = 0\}} 
		= 
	\begin{pmatrix}
		0 & 1 & 0 & \hdots & 0 \\
		1 & 0 & 0 & \hdots & 0 \\
		0 & 0 & g^{22}|_{x_1 = 0}  & \hdots & g^{2 n}|_{x_1 = 0}  \\
		\vdots & \vdots & \vdots & \ddots & \vdots \\
		0 & 0 & g^{n2}|_{x_1 = 0} & \hdots & g^{nn}|_{x_1 = 0}  \\
	\end{pmatrix}
\end{equation}
for $\a, \b = 0, \hdots, n$.
The components appearing in \eqref{eq: principal symbol} are given, at $x_1 = 0$, by
\[
	g^{ij}|_{x_1 = 0} = 	
	\begin{pmatrix}
		0 & 0 &  \hdots & 0 \\
		0 & g^{22}|_{x_1 = 0} & \hdots & g^{2 n}|_{x_1 = 0} \\
		\vdots & \vdots & \ddots & \vdots \\
		0 & g^{n2}|_{x_1 = 0} & \hdots & g^{nn}|_{x_1 = 0} \\
	\end{pmatrix}.
\]
Since the matrix
\[
	\begin{pmatrix}
		g^{22}|_{x_1 = 0} & \hdots & g^{2 n}|_{x_1 = 0} \\
		\vdots & \ddots & \vdots \\
		g^{n2}|_{x_1 = 0} & \hdots & g^{nn}|_{x_1 = 0} \\
	\end{pmatrix}
\]
is positive definite by Proposition \ref{prop: suitable coordinates}, we have proven \eqref{eq: conormal bundle}.

By standard \emph{microlocal} analytic hypoellipticity at elliptic points in $T^*\V$, see for example \cite{M2002}*{Thm.\ 4.2.2 \& Exe.\ 4.6.4}, we hence conclude that $u_\I$ is microlocally real analytic everywhere at $x_1 = 0$ except potentially at the conormal bundle $N^* \{x_1 = 0\}$, i.e.\ the analytic wave front set at $x_1 = 0$ is contained in the conormal bundle.
We will show the real analyticity at the conormal bundle by applying \cite{GZ2020}*{Thm.\ 2}, which requires a computation of the Hamiltonian vector field $H_\p$ at $N^* \{x_1 = 0\}$.
For this, we first compute $\d_1 \p|_{N^* \{x_1 = 0\}}$.
At an arbitrary point
\[
	q := (0, x_2, \hdots, x_n, \xi_1, 0, \hdots, 0) \in N^* \{x_1 = 0\},
\]
using \eqref{eq: inverse metric at x_1= 0} and Proposition \ref{prop: suitable coordinates}, we compute
\begin{align*}
	\d_1\p|_q
		&= \d_1g^{11}|_q (\xi_1)^2 \\
		&= - \sum_{\a, \b = 0}^n g^{\a 1}(\d_1g_{\a\b})g^{\b1}|_q (\xi_1)^2 \\
		&= - \d_1g_{00}|_q (\xi_1)^2 \\
		&= 2 \kappa (\xi_1)^2.
\end{align*}
We may now compute the Hamiltonian vector field as
\begin{align*}
	H_\p|_q
		&=  \sum_{j = 1}^n (\d_{\xi_j}\p) \d_j|_q - (\d_j\p) \d_{\xi_j}|_q \\
		&= - (\d_1 \p) \d_{\xi_1}|_q \\
		&= - 2 \kappa (\xi_1)^2 \d_{\xi_1}|_q,
\end{align*}
where we recall that $\kappa$ is nowhere vanishing.
In particular
\[
	\md \p|_{N^* \{x_1 = 0\} \backslash \{0\}} \neq 0
\]
and 
\[
	H_\p|_{N^* \{x_1 = 0\} \backslash \{0\}} \parallel \xi \cdot \d_\xi,
\]
which means that the assumptions in  \cite{GZ2020}*{Thm.\ 2} are satisfied.
Note here that \cite{GZ2020}*{Thm.\ 2} is only proven for scalar valued wave equations, but the argument goes through line by line for systems of equations with a scalar principal symbol, as in our case.
Indeed, \cite{GZ2020}*{Thm.\ 2} relies on Haber’s normal form in \cite{H2014} {\em only for the principal symbol}. 
Thus, having scalar principal symbol suffices.
Hence \cite{GZ2020}*{Thm.\ 2} implies that $u_\I|_{t_* = 0}$ is microlocally real analytic also at the conormal bundle.
It follows that $u_\I|_{t_* = 0}$ is real analytic in an open subset containing $\{x_1 = 0\}$.
Consequently, $u_\I$ and therefore $u$ is real analytic in an open neighborhood containing $p$, which completes the proof.
\end{proof}

\section{Joint quasinormal modes}
\label{sec: proof of first main result}

We continue by proving the next main result of this paper:
\begin{proof}[Proof of Theorem \ref{mainthm: K and K-dS}]
Let us for simplicity restrict in this proof to the case of complex functions, as opposed to complex tensor fields of higher rank.
This will make the proof more transparent and avoid the technical details involved with working with system of equations.
All such technicalities are already present in the proof of Theorem \ref{mainthm: general} above. 
We thus consider functions of the form
\[
	u(t_*, r, \phi_*, \theta) = e^{-i(\sigma t_* + k\phi_*)}v(r, \theta),
\]
which are smooth on
\[
	\U = \R_{t_*} \times (r_e - \de, r_c + \de)_r \times S^2_{\phi_*, \theta} \subset M_*.
\]
We aim to prove that $u$ is real analytic on $\U$.

\textbf{Step 1: Real analyticity near the horizons.}
We would like to apply Theorem \ref{mainthm: general} with $\H = \H_{e/c}^+$ and therefore need to check that all assumptions of Theorem \ref{mainthm: general} are satisfied.
Firstly, the Kerr(-de Sitter) spacetime is a vacuum spacetime with a non-negative cosmological constant, so the dominant energy condition is clearly satisfied.
Moreover, the horizons $\H_{e/c}^+$ are real analytic lightlike hypersurfaces.
Secondly, the Killing vector fields
\[
	W_{e/c}
		:= \d_{t_*} + \frac a{r_{e/c}^2 + a^2} \d_{\phi_*}
\]
are clearly lightlike at $\H_{e/c}^+$, respectively.
Since $A$ and $B$ are invariant under $\d_{t_*}$ and $\d_{\phi_*}$, they are also invariant under $W_{e/c}$.
Further, the surface gravity $\kappa_{e/c}$ of the horizons is computed using the extended metric in \eqref{eq: metric at horizons} as follows:
\begin{align*}
	\d_r g_* \left( W_{e/c}, W_{e/c}\right)|_{r_{e/c}}
		&= 2 g_*\left( \n_{\d_r} W_{e/c}, W_{e/c}\right)|_{r_{e/c}} \\
		&= 2 g_*\left( \n_{W_{e/c}}\d_r, W_{e/c}\right)|_{r_{e/c}} \\
		&= - 2 g_*\left( \d_r, \n_{W_{e/c}} W_{e/c}\right)|_{r_{e/c}} \\
		&= - 2 \kappa_{e/c} \ g_*\left( \d_r, W_{e/c}\right)|_{r_{e/c}} \\
		&= \mp \frac {2\kappa_{e/c}}b \frac{r_{e/c}^2 + a^2 \cos^2(\theta)}{r_{e/c}^2 + a^2}.
\end{align*}
On the other hand, we have
\begin{align*}
	\d_r g_* \left( W_{e/c}, W_{e/c}\right)|_{r_{e/c}}
		&= - \d_r \left( \frac{\mu(r)}{b^2\left(r^2 + a^2 \cos^2(\theta)\right)} \left( \frac{r^2 + a^2 \cos^2(\theta)}{r_{e/c}^2 + a^2} \right)^2 \right)|_{r_{e/c}} \\*
		&\qquad + \d_r \left( \frac{c(\theta)\sin^2(\theta)}{b^2\left(r^2 + a^2 \cos^2(\theta)\right)}\left(a - a \frac{r^2 + a^2}{r_{e/c}^2 + a^2} \right)^2 \right)|_{r_{e/c}} \\*
		&= - \frac{\mu'(r_{e/c})}{b^2} \frac{r_{e/c}^2 + a^2 \cos^2(\theta)}{\left( r_{e/c}^2 + a^2 \right)^2}.
\end{align*}
The surface gravity of the horizons $\H_{e/c}^+$ is thus given by
\[
	\kappa_{e/c}
		= \pm \frac{\mu'(r_{e/c})}{2b (r_{e/c}^2 + a^2)}.
\]
Since this is non-zero, we conclude that $\H_{e/c}^+$ are non-degenerate Killing horizons with respect to $W_{e/c}$.
Finally, we compute that
\begin{align*}
	\L_{W_{e/c}} u 
		&= \L_{\d_{t_*}}u + \frac a{r_{e/c}^2 + a^2} \L_{\d_{\phi_*}}u \\*
		&= - i \sigma u - i \frac a{r_{e/c}^2 + a^2} k u \\*
		&= - i \left( \sigma + \frac a{r_{e/c}^2 + a^2} k \right) u,
\end{align*}
which shows that $u$ is a mode with respect to $W_{e/c}$.
We may therefore apply Theorem \ref{mainthm: general} and conclude that $u$ is real analytic in open neighborhoods of the horizons, which are invariant under $W_e$ and $W_c$, respectively.

\textbf{Step 2: Real analyticity in the domain of outer communication.}
We now prove real analyticity of $u$ in the open subset
\[
	\W := \R_{t_*} \times \left(r_e, r_c\right)_r \times S^2_{\phi_*, \theta}.
\]
(Recall that $r_c = \infty$ in the Kerr spacetime).
The Boyer-Lindquist coordinates $(t, r, \phi, \theta)$ are defined on this set and are convenient to work with.
Since
\[
	\d_t = \d_{t_*}|_\W, \quad \d_\phi = \d_{\phi_*}|_\W,
\]
the conditions \ref{cond: second} and \ref{cond: third} in Theorem \ref{mainthm: K and K-dS} imply that 
\[
	u(t, r, \phi, \theta) = e^{- i(\sigma t + k \phi)} w(r, \theta),
\]
so we can equally well consider the modes with respect to the Boyer-Lindquist coordinates.
The dual metric $G$ of $g$ in Boyer-Lindquist coordinates is
\begin{equation} \label{eq: BL dual metric}
\begin{split}
	(r^2 + a^2 \cos^2(\theta))G
		&= \mu(r)\d_r^2 + c(\theta)\d_\theta^2 + \frac{b^2}{c(\theta)\sin^2(\theta)}\left(a \sin^2(\theta)\d_t + \d_\phi\right)^2 \\
		&\qquad - \frac{b^2}{\mu(r)} \left( (r^2 + a^2)\d_t + a \d_\phi \right)^2.
\end{split}
\end{equation}
We begin by proving real analyticity of $w$ in the open subset
\[
	(r_e, r_c)_r \times (0, \pi)_\theta
\]
i.e.\ we leave out the north and the south pole of $S^2_{\phi, \theta}$ for the moment.
Since we have assumed that the coefficients of $P$ are independent of $t$ and $\phi$, the function $w$ satisfies an induced equation on $(r_e, r_c)_r \times (0, \pi)_\theta$, with principal part given by
\begin{equation} \label{eq: principal not north south}
	\frac1{r^2 + a^2 \cos^2(\theta)} \left(\mu(r)\d_r^2 + c(\theta)\d_\theta^2 \right).
\end{equation}
Since $\mu(r), c(\theta) > 0$ in this set, the induced equation for $w$ is elliptic with real analytic coefficients. 
Standard analytic hypoellipticity, see for example \cite{M2002}*{Thm.\ 4.2.2 \& Exe.\ 4.6.4}, therefore implies that $w$ is real analytic in $(r_e, r_c)_r \times (0, \pi)_\theta$ and hence $u$ is real analytic in
\[
	\R_t \times \left(r_e, r_c\right)_r \times S^1_\phi \times (0, \pi)_\theta.
\]

We now turn to show that $u$ is also real analytic at the north and south poles of $S^2_{\phi, \theta}$, i.e.\ at the limits $\theta = 0$ and $\theta = \pi$, still with $r \in (r_e, r_c)$.
Note that the expression \eqref{eq: principal not north south} does not extend smoothly to those points.
We now write 
\[
	u(t, r, \phi, \theta) = e^{- i\sigma t} z(r, \phi, \theta),
\]
i.e.\ the idea is to show real analyticity of
\[
	z(r, \phi, \theta) := e^{- i k\phi}w(r, \theta),
\]
which is smooth in
\[
	(r_e, r_c)_r \times S^2_{\phi, \theta}.
\]
Since the coefficients of $P$ are independent of $t$, we get an induced equation for $z$, with real analytic coefficients and principal part
\begin{equation} \label{eq: mode equation in t}
	\mu(r)\d_r^2 + c(\theta)\d_\theta^2 + \frac{b^2}{c(\theta)\sin^2(\theta)} \d_\phi^2 - \frac{b^2}{\mu(r)} a^2 \d_\phi^2.
\end{equation}
We claim that this operator is elliptic at $\theta = 0$ and $\theta = \pi$.
For this, we note that
\begin{align*}
	c(\theta)\d_\theta^2 + \frac{b^2}{c(\theta)\sin^2(\theta)} \d_\phi^2
		&= \left(c(\theta) - \frac{b^2}{c(\theta)}\right)\d_\theta^2 + \frac{b^2}{c(\theta)} \left(\frac1{\sin^2(\theta)} \d_\phi^2 + \d_\theta^2 \right) \\
		&= \frac1{c(\theta)} \left(c(\theta)^2 - b^2\right)\d_\theta^2 + \frac{b^2}{c(\theta)}G_{S^2} \\
		&= \frac1{c(\theta)} \left(\left(b - \frac{\Lambda a^2}3\sin^2(\theta) \right)^2 - b^2\right)\d_\theta^2 + \frac{b^2}{c(\theta)}G_{S^2} \\
		&= h(\theta) \sin^2(\theta)\d_\theta^2 + \frac{b^2}{c(\theta)}G_{S^2},
\end{align*}
for some function $h$, which extends real analytically to $S^2$ and where $G_{S^2}$ is the dual metric to the standard metric on $S^2$.
Since both $\sin^2(\theta)\d_\theta^2$ and $G_{S^2}$ extend real analytically to $S^2$, we can evaluate this expression at $\theta = 0$ or $\theta = \pi$ and conclude that \eqref{eq: mode equation in t} is simply
\begin{align*}
	\mu(r)\d_r^2 + b G_{S^2}
\end{align*}
at the north and the south pole of $S^2$.
Since $\mu(r) > 0$ for $r \in (r_e, r_c)$ and $b > 0$, we conclude that \eqref{eq: mode equation in t} is indeed elliptic there as well.
Again, standard real analytic hypoellipticity, as in for example \cite{M2002}*{Thm.\ 4.2.2 \& Exe.\ 4.6.4}, implies that $z$, and therefore $u$, is real analytic also at the north and the south pole if $r \in (r_e, r_c)$.
To sum up, we now know that $u$ is real analytic in the domain of outer communication and slightly beyond the horizons, i.e.\ in a region of the form
\[
	\R_{t_*} \times (r_e - \e, r_c + \e)_r \times S^2_{\phi_*, \theta}.
\]

\textbf{Step 3: The region beyond the horizons.}
It remains to prove real analyticity in the regions beyond the horizons (only the event horizon if $\Lambda = 0$).
Consider first the region 
\[
	\R_{t_*} \times \left(r_e - \de, r_e \right)_r \times S^2_{\phi_*, \theta},
\]
beyond the event horizon.
We may use Boyer-Lindquist coordinates also here, since this region does not intersect any horizon, where the coordinates would not be defined.
Let us again consider
\[
	z(r, \phi, \theta) := e^{- ik\phi}w(r, \theta),
\]
which by assumption is smooth in
\[
	\left(r_e - \de, r_e\right)_r \times S^2_{\phi, \theta}.
\]
In this set, we have $\mu(r) < 0$.
Again, the coefficients of $P$ are independent of $t$ and the principal part of the induced equation for $z$ can be read off from \eqref{eq: mode equation in t} to be
\[
		- \abs{\mu(r)}\d_r^2 + c(\theta)\d_\theta^2 + \left(\frac{b^2}{c(\theta)\sin^2(\theta)} + \frac{b^2}{\abs{\mu(r)}} a^2\right) \d_\phi^2.
\]
Since the operator
\[
	c(\theta)\d_\theta^2 + \left(\frac{b^2}{c(\theta)\sin^2(\theta)} + \frac{b^2}{\abs{\mu(r)}} a^2\right) \d_\phi^2.
\]
is elliptic on 
\[
	\{ r \} \times S^2_{\phi, \theta},
\]
for all $r \in \left(r_e - \de, r_e\right)$, we conclude that all inextendible bicharacteristics of the induced operator pass through all hypersurfaces 
\[
	\{ r \} \times S^2_{\phi, \theta}
\]
precisely once.
Moreover, the induced equation for $z$ is a linear wave operator with real analytic coefficients on the
\[
	\left(r_e - \de, r_e\right)_r \times S^2_{\phi_*, \theta}.
\]
and we know by \textbf{Step 1} that $z$ is real analytic in an open subset
\[
	(r_e - \e, r_e) \times S^2_{\phi_*, \theta}
\]
for some $\e > 0$.
Propagation of analytic singularities, see for example \cite{M2002}*{Thm.\ 4.3.7 \& Exe.\ 4.6.4}, therefore implies that $z$ is real analytic in
\[
	\left(r_e - \de, r_e\right)_r \times S^2_{\phi, \theta},
\]
and hence $u$ is real analytic in 
\[
	\R_t \times \left(r_e - \de, r_e\right)_r \times S^2_{\phi, \theta}.
\]
One similarly treats the subset 
\[
	\R_t \times \left(r_c, r_c + \de\right)_r \times S^2_{\phi, \theta},
\]
in case $\Lambda > 0$.
This finishes the proof.
\end{proof}

\section{Standard quasinormal modes}\label{sec:improved}

We finish by proving our last main result:
\begin{proof}[Proof of Theorem \ref{mainthm: K and K-dS improved}]

As in the proof of Theorem \ref{mainthm: K and K-dS}, let us for simplicity restrict to the case of complex functions, as opposed to complex tensors of higher rank. 
This will again make the proof more transparent and avoid technical details that are completely analogous to the corresponding part of the proof of Theorem \ref{mainthm: general}.
It is convenient to change coordinate system to one that is better suited for the quasinormal mode condition (ii) in Theorem \ref{mainthm: K and K-dS improved}.
We introduce the new coordinate system $(\tau_*, r, \psi_*, \theta)$, where
\begin{equation} \label{eq: star coordinates}
	\begin{pmatrix}
	    \tau_* \\
		\psi_*
	\end{pmatrix} 
		:= 
	\begin{pmatrix}
	    t_* \\
		\phi_* - \frac a{r_0^2 + a^2} t_*
	\end{pmatrix},
\end{equation}
with again $r_0 \in (r_e, r_c)$ is uniquely defined by
\[
    \mu'(r_0) = 0,
\]
and $r_0 = \infty$ if $\Lambda = 0$.
Note that
\[
    \d_{\tau_*}
        = \d_{t_*} + \frac a {r_0^2 + a^2} \d_{\phi_*}, \quad
    \d_{\psi_*}
        = \d_{\phi_*}
\]
are both Killing vector fields, since $a$ and $r_0$ are constant.
It follows that $u$ is a quasinormal mode if and only if 
\[
	u(\tau_*, r, \psi_*, \theta) 
		= e^{-i \s \tau_*}z(r, \psi_*, \theta).
\]
where $z$ is smooth in
\[
	(r_e - \de, r_c + \de)_r \times S^2_{\psi_*, \theta},
\]
where $r_c = \infty$ if $\Lambda = 0$.
Since the coefficients of $P$ are independent of $t_*$ and $\phi_*$, and therefore of $\tau_*$, it follows that $z$ satisfies a $\tau_*$-reduced equation
\begin{equation} \label{eq: t star reduced}
	P_\s z = 0,
\end{equation}
in
\[
	(r_e - \de, r_c + \de)_r \times S^2_{\psi_*, \theta},
\]
where 
\[
    P_\s u 
    		:= e^{i \s \tau_*} P\left(e^{- i \s \tau_*} u \right).
\]
We now further decompose into angular modes
\begin{equation} \label{eq: Fourier decomposition}
	z(r, \psi_*, \theta) = \sum_{k \in \Z} e^{- ik\psi_*}v_k(r, \theta),
\end{equation}
where we claim that each summand
\[
	e^{- ik\psi_*} v_k(r, \theta) = \frac1{2\pi} e^{- ik\psi_*} \int_0^{2\pi} e^{iks}z(r, s, \theta) ds
\]
is smooth on $(r_e - \de, r_c + \de)_r \times S^2_{\psi_*, \theta}$.
Indeed, let $u$ be the unique solution at a point
\[
	(s, r, p) \in \R_s \times (r_e - \de, r_c + \de)_r \times S^2_{\psi_*, \theta}
\]
to
\begin{align*}
	\left( \d_s + ik \right) u(s, r, p)
		&= \frac1{2\pi} z\left(r, \exp_p \left(s\d_{\psi_*} \right) \right), \\
	u(0, r, p)
		&= 0.
\end{align*}
where $\exp(s\d_{\psi_*})$ denotes the flow along $\d_{\psi_*}$ at time $s$, starting at $p$.
Then $u$ is smooth and since
\[
	e^{- ik\psi_*} v_k(r, \theta)
		= u(2\pi, r, p),
\]
where $p = (\psi_*, \theta)$, it is smooth as claimed.
Since the coefficients of $P$ are independent of $t_*$ and $\phi_*$, and therefore of $\psi_*$, and $P_\s$ acts diagonally on the $\psi_*$-Fourier modes, it follows that
\[
	P_\s \left( e^{- ik\psi_*} v_k \right) 
		= 0,
\]
in
\[
	(r_e - \de, r_c + \de)_r \times S^2_{\psi_*, \theta},
\]
for each $k \in \Z$.

Now, if $\Lambda > 0$, then \cite{PV2021}*{Thm.\ 2.1} implies that the operator $P_\s$ is a Fredholm operator between appropriate function spaces containing $e^{- ik\psi_*} v_k$.
Since the kernel is finite dimensional, it follows that only finitely many such terms can be non-zero.
We conclude that
\begin{equation} \label{eq: Fourier decomposition finite}
	z(r, \psi_*, \theta) 
		= \sum_{j =1}^N e^{- ik_j\psi_*}v_{k_j}(r, \theta)
\end{equation}
and therefore
\begin{align*}
	u(\tau_*, r, \psi_*, \theta)
		&= \sum_{j =1}^N e^{- i \left( \s \tau_* + k_j \psi_* \right)}v_{k_j}(r, \theta) \\
		&= \sum_{j =1}^N e^{- i \left( \left(\s - \frac a{r_0^2 + a^2}k_j \right) t_* + k_j \phi_* \right)}v_{k_j}(r, \theta).
\end{align*}
Each term satisfies the assumption of Theorem \ref{mainthm: K and K-dS} and are therefore analytic. 
Hence the \emph{finite} sum is also real analytic, concluding the proof when $\Lambda > 0$.

In order to similarly proceed in the case $\Lambda = 0$, we need to instead use the Fredholm theory developed in \cite{V2018} (remember that the above coordinate change is trivial when $\Lambda = 0$) to deduce that in fact 
\begin{equation}
	z(r, \phi_*, \theta) 
		= \sum_{j =1}^N e^{- ik_j\phi_*}v_{k_j}(r, \theta)
\end{equation}
is a \emph{finite} sum.
In this case we have $r_c = \infty$ and the cosmological horizon is replaced by an asymptotically Euclidean end.
For the analysis near the event horizon, the methods based on \cite{V2013} described above can be applied without changes. 
However, the analysis near the asymptotically Euclidean end cannot be based on \cite{V2013}, we instead need to use a slight generalization of \cite{V2018}*{Prop.\ 5.28}.
Let us therefore briefly recall how the Fredholm problem was set up in \cite{V2018}*{Prop.\ 5.28}.
We begin by bordifying the space
\[
	(r_e - \de, \infty)_r \times S^2_{\phi_*, \theta}, 
\] 
at $r = \infty$ by introducing $x := \frac1r$, i.e.\ we radially compactify spacelike infinity.
We thus write
\[
	\V := \left[0, \frac1{r_e - \de}\right)_x \times S^2_{\phi_*, \theta} \subset \overline{\R^3}, 
\]
where $\overline {\R^3}$ is the radially compactified $\R^3$.
On these spaces, we define
\[
	\mathcal Y_{sc}^{s, l} 
		:= \left\{ u|_{\V} \mid u \in H^{s, l}(\overline{\R^3}) \right\},
\]
where $s, l$ are variable order differential and decay orders (as $x \to 0$), which we will choose below.
We refer to \cite{V2018}*{Sec.\ 5.3.9} for the definition of variable order weighted Sobolev spaces $H^{s, l}(\overline{\R^3})$.
Note that near the spacelike hypersurface 
\[
	\left\{x = \frac1{r_e - \de}\right\}, 
\]
$\mathcal Y_{sc}^{s, l}$ is similar to $\mathcal Y^s$ introduced above.
Analogous to above, define
\[
	\mathcal X_{sc}^{s, l} 
		:= \left\{u \in \mathcal Y_{sc}^{s,l} \mid \hat P u \in \mathcal Y_{sc}^{s-1,l+1} \right\}
\]
and consider
\begin{equation} \label{eq: Fredholm Lambda 0}
	\hat P: \mathcal X_{sc}^{s, l} \to \mathcal Y_{sc}^{s-1,l+1}.
\end{equation}
The characteristic set of $\hat P$ has two components, one close to the event horizon and a scattering characteristic set at $x = 0$, in particular, the characteristic set at fiber infinity near $x = 0$ is empty.
By the decay assumptions on $A$ and $B$, the scattering principal symbol of $\hat P$ at $x = 0$ is given by
\[
	\p|_{x = 0}(\xi) = \abs{\xi}^2 \Id - \sigma^2,
\]
for the \emph{fixed} $\sigma$ with $\Im \sigma \geq 0$, and any
\[
	\xi \in {}^{sc}T^*_{\{x = 0\}}\V.
\]
If $\Im \sigma > 0$ it follows that $\sigma^2 \notin [0, \infty)$, which implies that $\hat P$ is \emph{elliptic} as set up in \eqref{eq: Fredholm Lambda 0} and consequently a Fredholm operator for any order $s, l$.
However, in case $\sigma \in \R \backslash \{0\}$, there is a scattering characteristic set at $x = 0$, given by all $\xi \in {}^{sc}T^*_{\{x = 0\}}\V$ with $\abs{\xi} = \abs \sigma$.
As shown in \cite{V2018}*{p.\ 311--314}, the sets 
\[
	L_\pm = \left\{(y, \xi) \in {}^{sc}T^*_{\{x = 0\}}\V \mid y = c \xi, \abs{\xi}^2 = \sigma^2, \pm c > 0\right\},
\]
act as a source and a sink, respectively, for the Hamiltonian flow.
It is also shown that \eqref{eq: Fredholm Lambda 0} is a Fredholm operator (c.f.\ \cite{V2018}*{Prop.\ 5.28}), if $l$ is chosen such that either
\[
	l|_{L_+} < - \frac12 \text{ and } l|_{L_-} > -\frac12
\]
or the other way around (with $L_+$ and $L_-$ swapped).
The decay assumption on $u|_{t_* = 0}$ in Theorem \ref{mainthm: K and K-dS improved} ensures that $z|_{t_* = 0} \in \mathcal X^{s,l}_{sc}$, and therefore each summand in \eqref{eq: Fourier decomposition}, is in $\ker(\hat P)$ as set up in \eqref{eq: Fredholm Lambda 0}.
We have thus proven \eqref{eq: Fourier decomposition finite}, for the case when $\Lambda = 0$ and $\sigma \neq 0$ (and $\Im \sigma \geq 0$).

The case which remains is when $\Lambda = \sigma = 0$.
The structure of the operator $\hat P$ now changes drastically near $x = 0$ and is more naturally thought of as a b-operator in the sense of Melrose \cite{Me1993}, see also \cites{GH2008, GH2009}.
We follow \cite{V2018}*{Sec.\ 5.6} for the Fredholm theory.
Concretely, we note that the fast decay assumptions on $A$ and $B$ ensure that
\[
	x^{-\frac{n-2}2}x^{-2} \hat P x^{\frac{n-2}2}
\]
is a b-operator with normal operator 
\[
	- (x\d_x)^2 + \Delta_h + \frac{(n-2)^2}4
\]
at $x = 0$.
Choose a smooth function $f: [0, \infty) \to [0, \infty)$, such that $f(x) = x$ for $x \leq \e$ and $f(x) = 1$ for $x \geq 2\e$ and define
\[
	L := f(x)^{-\frac{n-2}2}f(x)^{-2} \hat P f(x)^{\frac{n-2}2},
\]
where $\e > 0$ small enough so that the component of the characteristic set of $\hat P$ away from $x = 0$ is unaffected by this conjugation.
We now define the spaces
\[
	\mathcal Y_b^{s, l} := \{u|_{\V} \mid u \in H^{s, l}_b(\overline{\R^3})\},
\]
where $H^{s, l}_b(\overline{\R^3})$ is defined in \cite{V2018}*{p.\ 353} and
\[
	\mathcal X_b^{s, l} := \left\{u \in \mathcal Y_b^{s,l} \mid L u \in \mathcal Y_b^{s-1,l} \right\}.
\]
By combining the discussion on \cite{V2018}*{p.\ 361} (c.f.\ also \cite{V2018}*{Thm.\ 5.11}) with the theory near the event horizon described above, we know that
\[
	L: \mathcal X_b^{s, l} \to \mathcal Y_b^{s-1, l} 
\]
is a Fredholm for all $s, l \in \R$, such that
\[
	l^2 - \frac{(n-2)^2}4
\]
is not an $L^2$ eigenvalue of $\Delta$ on the $2$-sphere. 
Since the set of $L^2$-eigenvalues is discrete, we can choose $l$ arbitrarily large and still have a Fredholm operator.
It follows that the kernel of $L$ is finite dimensional.
Now, the kernel of $\hat P$ and the kernel of $L$ are related just by a multiplication with $f(x)^{\frac{n-2}2}$ and we have thus proven the $\ker(\hat P)$ is finite dimensional and consequently \eqref{eq: Fourier decomposition finite}.
This finishes the proof.
\end{proof}

\appendix

\section{Surface gravity of a Killing horizon}
Let us verify the claim in Remark \ref{rmk: nowhere vanishing surface gravity} about the surface gravity of a Killing horizon:

\begin{lemma} \label{le: surface gravity}
Consider a smooth spacetime $(M,g)$, with a smooth connected lightlike hypersurface $\H \subset M$ and a smooth Killing vector field $W$ on $M$, such that $W|_\H$ is nowhere vanishing, lightlike and tangent to $\H$.
If \eqref{eq: constant surface gravity condition} is satisfied, then there is a constant $\kappa \in \R$, such that
\[
	\n_W W|_\H = \kappa W|_\H.
\]
\end{lemma}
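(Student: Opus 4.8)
The plan is to exploit two facts: first, that a lightlike hypersurface $\H$ to which $W$ is tangent is automatically $W$-invariant and null-generated by the integral curves of $W|_\H$ (at least where $W|_\H \neq 0$); and second, that the dominant energy condition forces $\Ric(W,\cdot)$ to annihilate $T\H$, which is exactly what is needed to conclude that the ``surface gravity function'' defined pointwise by $\n_W W|_\H = \kappa\, W|_\H$ is in fact constant along $\H$.

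First I would show that $\n_W W|_\H$ is proportional to $W|_\H$ at each point, i.e.\ that $\kappa$ is well-defined as a \emph{function} on $\H$. Since $W$ is Killing, $g(\n_W W, W) = \tfrac12 W\bigl(g(W,W)\bigr)$, and $g(W,W)$ vanishes identically on $\H$ while $\n_W W$ is tangent to $\H$ (as the integral curves of $W$ through points of $\H$ stay in $\H$); combined with the fact that, on a null hypersurface, the null normal direction $W|_\H$ is $g$-orthogonal to all of $T\H$ and $g$ restricted to $T\H$ is positive semidefinite with kernel spanned by $W|_\H$, one deduces that any tangent vector orthogonal to $W$ and with zero inner product with $W$ must be a multiple of $W$. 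Hence $\n_W W|_\H = \kappa\, W|_\H$ for a (smooth, on the locus $W \neq 0$) function $\kappa$ on $\H$; at points where $W|_\H = 0$ one argues separately, either by continuity or by noting $\n_W W = 0$ there too.

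Next I would prove $\kappa$ is constant. Differentiate the relation $\n_W W = \kappa W$ along a vector $X \in T\H$ and pair with $W$: using $\n$ metric, $W$ Killing (so $g(\n_X W, W) = 0$ and the Killing identity $\n^2_{W,W}W$ relates to a curvature term), and the standard computation that for a Killing field $\n_a \n_b W_c = R_{cbad}W^d$, one gets an expression of the form $X(\kappa)\, g(W,W) + \kappa\, g(\n_X W, W) = (\text{curvature term})\cdot g(W,\cdot)$ that localizes on $\H$ to $0 = -\Ric(W, X)$ up to lower-order terms; more cleanly, the classical identity $W^a \n_a \kappa = \Ric(W,W)$ along the null generator together with $\Ric(W,W)=0$ (from the dominant energy condition applied to the causal vector $W|_\H$, which is null, forcing $\T(W,W)\geq 0$ and $\geq$ a null future-pointing vector's pairing, hence $=0$) shows $\kappa$ is constant along each generator, and the transverse constancy follows from $\Ric(W,X) = 0$ for $X \in T\H$, which is precisely equation \eqref{eq: Ric W X 0} obtained from the dominant energy condition. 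Assembling: $d\kappa$ annihilates $T\H$, so $\kappa$ is locally constant on $\H$; if $\H$ is connected it is a genuine constant, and in any case the statement is local.

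The main obstacle I expect is the careful derivation of \eqref{eq: Ric W X 0}, i.e.\ extracting from the dominant energy condition that $\Ric(W|_\H, X) = 0$ for all $X \in T\H$. The point is that $W|_\H$ is null and future-pointing (after orienting), so by the dominant energy condition $-\T(W|_\H,\cdot)^\sharp = -\Ric(W|_\H,\cdot)^\sharp + \tfrac12\Scal\, W|_\H$ is causal future-pointing; pairing with the null vector $W|_\H$ itself and using that two future causal vectors have $\le 0$ inner product, together with the reverse inequality coming from the Raychaudhuri-type computation (the null generators of $\H$ have vanishing expansion and shear contributions that make $\Ric(W,W)\le 0$ on $\H$, or an argument via the constancy of $g(W,W)=0$), pins down $\Ric(W|_\H, W|_\H) = 0$; then causality of $-\T(W|_\H,\cdot)^\sharp$ plus the fact that a causal vector orthogonal to a null vector must be proportional to it forces $\Ric(W|_\H, X) = 0$ for $X \in T\H$. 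Getting the signs and the orientation conventions exactly right, and handling the zero set of $W$ on $\H$, is the delicate part; everything else is a routine Killing-field computation.
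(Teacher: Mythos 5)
Your proposal follows the same route as the paper's proof: (i) show that $\n_W W|_\H$ lies in the kernel of the degenerate induced metric on $T\H$ and is hence proportional to $W|_\H$, so that $\kappa$ is defined as a function on $\H$; (ii) use the dominant energy condition, together with $\Ric(W|_\H,W|_\H)=0$ and the fact that a causal vector orthogonal to a null vector must be proportional to it, to get $\Ric(W|_\H,X)=0$ for all $X\in T\H$; (iii) conclude that $\md\kappa$ annihilates $T\H$. The differences are essentially packaging: the paper introduces a one-form $\o$ with $\n_XW|_\H=\o(X)W|_\H$, derives $R(X,Y)W|_\H=\md\o(X,Y)\,W|_\H$, and reads off $\Ric(W,W)|_\H=0$ from a direct frame computation (no energy condition and no Raychaudhuri needed for that sub-fact), whereas you invoke the Killing identity $\n_a\n_bW_c=R_{cbad}W^d$ and a Raychaudhuri-type argument; these are equivalent in content.

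One step fails as literally written. In your first paragraph you assert that ``any tangent vector orthogonal to $W$ and with zero inner product with $W$ must be a multiple of $W$.'' Since $W|_\H$ spans the null normal of $\H$, \emph{every} vector in $T\H$ is orthogonal to $W$, so this criterion is vacuous and does not single out the null direction; knowing only that $g(\n_WW,W)=0$ and that $\n_WW$ is tangent to $\H$ is not enough. What you actually need --- and what the paper establishes first --- is that $\n_WW|_\H$ is orthogonal to \emph{all} of $T\H$: for $Y\in T\H$ the Killing equation gives $g(\n_WW,Y)=-g(\n_YW,W)=-\tfrac12\,Y\bigl(g(W,W)\bigr)=0$, because $g(W,W)$ vanishes identically on $\H$ and $Y$ is tangent to $\H$. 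Only then does the kernel characterization you cite (positive semidefinite induced metric with kernel $\R\, W|_\H$) apply. This is a one-line repair, but without it your step (i) does not establish that $\kappa$ is well defined pointwise. The rest of the argument, including the derivation of $\Ric(W|_\H,X)=0$ from the dominant energy condition, matches the paper's.
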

\begin{proof}
Using that $W|_\H$ is lightlike and tangent to $\H$, we compute that for all vector fields $X, Y$, tangent to $\H$, we have
\begin{align*}
	g(\n_XW, Y)|_\H 
		&= \frac12\L_Wg(X, Y)|_\H  + \frac12 \left(g(\n_XW, Y)|_\H - g(\n_YW, X)|_\H\right) \\
		&= \frac12 \left(X g(W, Y)|_\H  - Yg(W, X)|_\H  - g(W, [X, Y])|_\H\right) \\
		&= 0,
\end{align*}
since also $[X, Y]$ is tangent to $\H$.
Hence $\n_X W$ is tangent to $\H$ and normal to $\H$, meaning that there is a unique one-form $\o$ on $\H$, such that
\[
	\n_X W|_\H = \o(X) W|_\H.
\]
The assertion in the lemma is thus that $\omega(W|_\H)$ is constant.
Since $W$ is a Killing vector field, with $W|_\H$ tangent to $\H$, it is immediate that
\[
	\L_W\o|_\H = 0.
\]
For any $X \in T\H$, we have
\begin{align*}
	X(\omega(W|_\H))
		&= \md \omega(X, W|_\H) + W|_\H \left( \omega(X) \right) + \omega([X, W|_\H]) \\
		&= \md \omega(X, W|_\H) + \L_{W|_\H} \omega(X) \\
		&= \md \omega(X, W|_\H).
\end{align*}
It thus remains to show that $\md \omega(X, W|_\H) = 0$, for all $X \in T\H$.
For this, we first note that for all $X, Y \in T\H$, we have
\begin{align*}
	R(X, Y)W|_\H
		&= \n_X \n_Y W|_\H - \n_Y \n_X W|_\H - \n_{[X, Y]}W|_\H \\
		&= \n_X(\omega(Y)W|_\H) - \n_Y(\omega(X)W|_\H) - \omega([X, Y])W|_\H \\
		&= X (\omega(Y))W|_\H + \omega(Y)\omega(X)W|_\H - Y(\omega(X))W|_\H \\
		&\qquad - \omega(X)\omega(Y)W|_\H - \omega([X, Y])W|_\H \\
		&= \md\omega(X,Y) W|_\H.
\end{align*}
Let $e_0 := W|_\H, e_2, \hdots, e_n$ locally span $T\H$ and let $e_1$ be the unique locally defined vector field along $\H$, transversal to $\H$, such that
\[
	g(e_1, e_0)|_\H = 1, \quad g(e_1, e_j)|_\H = 0,
\]
for $j = 1, \hdots, n$. 
We now trace the curvature expression using this local frame, with any $X \in T\H$, and compute
\begin{align*}
	\Ric(X, W)|_\H
		&= \sum_{\a, \b = 0}^n g^{\a \b} R\left(e_\a, X, W, e_\b\right)|_\H \\
		&= R\left(W, X, W, e_1\right)|_\H + R\left(e_1, X, W, W\right)|_\H \\*
		&\qquad + \sum_{i, j = 2}^n g^{ij} R\left(e_i, X, W, e_j\right)|_\H \\
		&= \md \o(W|_\H, X) g(W, e_1) + \sum_{i, j = 2}^n g^{ij} \md \o(e_i, X) g(W, e_j)|_\H \\
		&= \md \o(W|_\H, X).
\end{align*}
We therefore conclude that 
\begin{align*}
	X \kappa
		&= X(\o(W|_\H)) \\
		&= \md \o(W|_\H, X) \\
		&= \Ric(X, W)|_\H \\
		&= 0,
\end{align*}
for all $X \in T\H$, which proves that $\kappa$ is constant.
\end{proof}

\begin{bibdiv}
\begin{biblist}

\bib{AIK2010}{article}{
   author={Alexakis, Spyros},
   author={Ionescu, Alexandru D.},
   author={Klainerman, Sergiu},
   title={Hawking's local rigidity theorem without analyticity},
   journal={Geom. Funct. Anal.},
   volume={20},
   date={2010},
   number={4},
   pages={845--869},
}

\bib{AIK2010_2}{article}{
   author={Alexakis, Spyros},
   author={Ionescu, Alexandru D.},
   author={Klainerman, Sergiu},
   title={Uniqueness of smooth stationary black holes in vacuum: small
   perturbations of the Kerr spaces},
   journal={Comm. Math. Phys.},
   volume={299},
   date={2010},
   number={1},
   pages={89--127},
}

\bib{AIK2014}{article}{
   author={Alexakis, Spyros},
   author={Ionescu, Alexandru D.},
   author={Klainerman, Sergiu},
   title={Rigidity of stationary black holes with small angular momentum on
   the horizon},
   journal={Duke Math. J.},
   volume={163},
   date={2014},
   number={14},
   pages={2603--2615},
}

\bib{BH2008}{article}{
  AUTHOR = {Bony, Jean-Fran{\c{c}}ois},
  author={H{\"a}fner, Dietrich},
     TITLE = {Decay and non-decay of the local energy for the wave equation
              on the de {S}itter-{S}chwarzschild metric},
   JOURNAL = {Comm. Math. Phys.},
    VOLUME = {282},
      YEAR = {2008},
    NUMBER = {3},
     PAGES = {697--719},
}

\bib{BR2021}{article}{
   author={Bustamante, I.},
   author={Reiris, M.},
   title={On the existence of Killing fields in smooth spacetimes with a compact Cauchy horizon},
   journal={Class. Quant. Grav.},
   volume={38},
   date={2021},
   number={7},
   pages={Paper No. 075010, 16},
}

\bib{D2011}{article}{
    AUTHOR = {Dyatlov, Semyon},
     TITLE = {Quasi-normal modes and exponential energy decay for the
              {K}err-de {S}itter black hole},
   JOURNAL = {Comm. Math. Phys.},
    VOLUME = {306},
      YEAR = {2011},
    NUMBER = {1},
     PAGES = {119--163},
}

\bib{D2012}{article}{
    AUTHOR = {Dyatlov, Semyon},
     TITLE = {Asymptotic distribution of quasi-normal modes for {K}err-de {S}itter black holes},
	JOURNAL ={Ann. Henri Poincare},
	volume = {13}, 
	pages={1101--1166},
	year={2012}
}

\bib{FRW1999}{article}{
   author={Friedrich, Helmut},
   author={R\'acz, Istv\'an},
   author={Wald, Robert~M.},
   title={On the rigidity theorem for spacetimes with a stationary event
   horizon or a compact Cauchy horizon},
   journal={Comm. Math. Phys.},
   volume={204},
   date={1999},
   number={3},
   pages={691--707},
}

\bib{GW2020}{article}{
  AUTHOR = {Gajic, Dejan},
  author={Warnick, Claude},
     TITLE = {A model problem for quasinormal ringdown of asymptotically flat or extremal black holes},
   JOURNAL = {J. Math. Phys.},
    VOLUME = {61},
      YEAR = {2020},
    NUMBER = {10},
}

\bib{GZ2020}{article}{
   author={Galkowski, Jeffrey},
   author={Zworski, Maciej},
   title={Analytic hypoellipticity of Keldysh operators},
   journal={Proc. Lond. Math. Soc. (3)},
   volume={123},
   date={2021},
   number={5},
   pages={498--516},
}

\bib{GH2008}{article}{
   author={Guillarmou, Colin},
   author={Hassell, Andrew},
   title={Resolvent at low energy and Riesz transform for Schr\"{o}dinger
   operators on asymptotically conic manifolds. I},
   journal={Math. Ann.},
   volume={341},
   date={2008},
   number={4},
   pages={859--896},
}

\bib{GH2009}{article}{
   author={Guillarmou, Colin},
   author={Hassell, Andrew},
   title={Resolvent at low energy and Riesz transform for Schr\"{o}dinger operators on asymptotically conic manifolds. II},
   journal={Ann. Inst. Fourier (Grenoble)},
   volume={59},
   date={2009},
   number={4},
   pages={1553--1610},
}

\bib{GM2021}{article}{
   author={Gurriaran, Sebastian},
   author={Minguzzi, Ettore},
   title={Surface gravity of compact non-degenerate horizons under the dominant energy condition},
      journal={Comm. Math. Phys.},
   volume={395},
   date={2022},
   number={2},
   pages={679--713},
}

\bib{H2014}{article}{
   author={Haber, Nick},
   title={A normal form around a Lagrangian submanifold of radial points},
   journal={Int. Math. Res. Not. IMRN},
   date={2014},
   number={17},
   pages={4804--4821},
}

\bib{H1972}{article}{
   author={Hawking, Stephen~W.},
   title={Black holes in general relativity},
   journal={Comm. Math. Phys.},
   volume={25},
   date={1972},
   pages={152--166},
}

\bib{HE1973}{book}{
   author={Hawking, Stephen W.},
   author={Ellis, George~F.\ R.},
   title={The large scale structure of space-time},
   note={Cambridge Monographs on Mathematical Physics, No. 1},
   publisher={Cambridge University Press, London},
   date={1973},
   pages={xi+391},
}

\bib{HV2015}{article}{
  AUTHOR = {Hintz, Peter},
  author={Vasy, Andr{\'a}s},
     TITLE = {Semilinear wave equations on asymptotically de {S}itter,
              {K}err--de {S}itter and {M}inkowski spacetimes},
   JOURNAL = {Anal. PDE},
    VOLUME = {8},
      YEAR = {2015},
    NUMBER = {8},
     PAGES = {1807--1890},
   }

\bib{HV2018}{article}{
  AUTHOR = {Hintz, Peter},
  author={Vasy, Andr\'as},
     TITLE = {The global non-linear stability of the {K}err--de {S}itter
              family of black holes},
   JOURNAL = {Acta Math.},
    VOLUME = {220},
      YEAR = {2018},
    NUMBER = {1},
     PAGES = {1--206},
}

\bib{HIW2007}{article}{
   author={Hollands, Stefan},
   author={Ishibashi, Akihiro},
   author={Wald, Robert~M.},
   title={A higher dimensional stationary rotating black hole must be
   axisymmetric},
   journal={Comm. Math. Phys.},
   volume={271},
   date={2007},
   number={3},
   pages={699--722},
}

\bib{IonescuKlainerman2013}{article}{
   author={Ionescu, Alexandru~D.},
   author={Klainerman, Sergiu},
   title={On the local extension of Killing vector-fields in Ricci flat
   manifolds},
   journal={J. Amer. Math. Soc.},
   volume={26},
   date={2013},
   number={2},
   pages={563--593},
}

\bib{IM1985}{article}{
   author={Isenberg, James},
   author={Moncrief, Vincent},
   title={Symmetries of cosmological Cauchy horizons with exceptional
   orbits},
   journal={J. Math. Phys.},
   volume={26},
   date={1985},
   number={5},
   pages={1024--1027},
}

\bib{KIS2000}{article}{
  author = {Kodama, Hideo},
  author={Ishibashi, Akihiro},
  author={Seto, Osamu},
  title = {Brane world cosmology: gauge-invariant formalism for perturbation},
  journal = {Phys. Rev. D (3)},
   volume = {62},
   year = {2000},
   number = {6},
}

\bib{LZ2019}{article}{
   author={Lebeau, Gilles},
   author={Zworski, Maciej},
   title={Remarks on Vasy's operator with analytic coefficients},
   journal={Proc. Amer. Math. Soc.},
   volume={147},
   date={2019},
   number={1},
   pages={145--152},
}

\bib{M2002}{book}{
   author={Martinez, Andr\'{e}},
   title={An introduction to semiclassical and microlocal analysis},
   series={Universitext},
   publisher={Springer-Verlag, New York},
   date={2002},
   pages={viii+190},
}

\bib{Me1993}{book}{
   author={Melrose, Richard B.},
   title={The Atiyah-Patodi-Singer index theorem},
   series={Research Notes in Mathematics},
   volume={4},
   publisher={A K Peters, Ltd., Wellesley, MA},
   date={1993},
   pages={xiv+377},
}

\bib{M1994}{article}{
   author={Melrose, Richard B.},
   title={Spectral and scattering theory for the Laplacian on asymptotically Euclidian spaces},
   conference={
      title={Spectral and scattering theory},
      address={Sanda},
      date={1992},
   },
   book={
      series={Lecture Notes in Pure and Appl. Math.},
      volume={161},
      publisher={Dekker, New York},
   },
   date={1994},
   pages={85--130},
}

\bib{MI1983}{article}{
   author={Moncrief, Vincent},
   author={Isenberg, James},
   title={Symmetries of cosmological Cauchy horizons},
   journal={Comm. Math. Phys.},
   volume={89},
   date={1983},
   number={3},
   pages={387--413},
}

\bib{MI2008}{article}{
   author={Moncrief, Vincent},
   author={Isenberg, James},
   title={Symmetries of higher dimensional black holes},
   journal={Classical Quantum Gravity},
   volume={25},
   date={2008},
   number={19},
   pages={195015, 37},
}

\bib{MI2018}{article}{
   author={Moncrief, Vincent},
   author={Isenberg, James},
   title={Symmetries of cosmological Cauchy horizons with non-closed orbits},
   journal={Comm. Math. Phys.},
   volume={374},
   date={2020},
   number={1},
   pages={145--186},
}

\bib{P2018}{article}{
   author={Petersen, Oliver~L.},
   title={Wave equations with initial data on compact Cauchy horizons},
   journal={Anal. PDE},
   volume={14},
   date={2021},
   number={8},
   pages={2363--2408},
}

\bib{P2019}{article}{
   author={Petersen, Oliver~L.},
   title={Extension of Killing vector fields beyond compact Cauchy horizons},
   journal={Adv. Math.},
   volume={391},
   date={2021},
   pages={Paper No. 107953, 65},
}

\bib{PR2018}{article}{
   author={Petersen, Oliver},
   author={R\'{a}cz, Istv\'{a}n},
   title={Symmetries of vacuum spacetimes with a compact Cauchy horizon of constant non-zero surface gravity},
   JOURNAL = {Ann.~Henri Poincar\'e},
    VOLUME = {24},
      YEAR = {2023},
     PAGES = {3921--3943},
}

\bib{PV2021}{article}{
   author={Petersen, Oliver},
   author={Vasy, Andr\'as},
   title={Wave equations in the Kerr-de Sitter spacetime: the full subextremal range},
   journal={J.~Eur.~Math.~Soc. (to appear) arXiv: 2112.01355},
}

\bib{R2000}{article}{
   author={R\'{a}cz, Istv\'{a}n},
   title={On further generalization of the rigidity theorem for spacetimes
   with a stationary event horizon or a compact Cauchy horizon},
   journal={Classical Quantum Gravity},
   volume={17},
   date={2000},
   number={1},
   pages={153--178},
}

\bib{RW1957}{article}{
  author={Regge, Tullio},
  author={Wheeler, John~A.},
title={{S}tability of a {S}chwarzschild {S}ingularity},
journal={Phys. Rev.},
volume={108},
pages={1063--1069},
year={1957},
}

\bib{SBZ1997}{article}{
  AUTHOR = {S{\'a} Barreto, Ant{\^o}nio},
  author={Zworski, Maciej},
TITLE = {Distribution of resonances for spherical black holes},
JOURNAL = {Math. Res. Lett.},
VOLUME = {4},
YEAR = {1997},
NUMBER = {1},
PAGES = {103--121},
}

\bib{SR2015}{article}{
    AUTHOR = {Shlapentokh-Rothman, Yakov},
     TITLE = {Quantitative mode stability for the wave equation on the
              {K}err spacetime},
   JOURNAL = {Ann. Henri Poincar\'e},
    VOLUME = {16},
      YEAR = {2015},
    NUMBER = {1},
     PAGES = {289--345},
}

\bib{V2013}{article}{
   author={Vasy, Andr\'{a}s},
   title={Microlocal analysis of asymptotically hyperbolic and Kerr-de
   Sitter spaces (with an appendix by Semyon Dyatlov)},
   journal={Invent. Math.},
   volume={194},
   date={2013},
   number={2},
   pages={381--513},
}

\bib{V2018}{article}{
	author={Vasy, Andr\'{a}s}, 
	title={A Minicourse on Microlocal Analysis for Wave Propagation},
	book={
		title={Asymptotic Analysis in General Relativity},
		series={London Mathematical Society Lecture Note Series}, 
		publisher={Cambridge University Press}, 
		place={Cambridge}, 
		year={2018}, 
		editor={Daudé, Thierry and Häfner, Dietrich and Nicolas, Jean-PhilippeEditors},
	},
	pages={219--374},
}

\bib{V2019}{article}{
   author={Vasy, Andr\'{a}s},
   title={Limiting absorption principle on Riemannian scattering
   (asymptotically conic) spaces, a Lagrangian approach},
   journal={Comm. Partial Differential Equations},
   volume={46},
   date={2021},
   number={5},
   pages={780--822},
}

\bib{VZ2000}{article}{
   author={Vasy, Andr\'{a}s},
   author={Zworski, Maciej},
   title={Semiclassical estimates in asymptotically Euclidean scattering},
   journal={Comm. Math. Phys.},
   volume={212},
   date={2000},
   number={1},
   pages={205--217},
}

\bib{V1970}{article}{
  author={Vishveshwara, C.~V.},
title={{S}tability of the {S}chwarzschild {M}etric},
journal={Phys. Rev. D},
volume={1},
pages={2870--2879},
year={1970}
}

\bib{W1989}{article}{
    AUTHOR = {Whiting, Bernard F.},
     TITLE = {Mode stability of the {K}err black hole},
   JOURNAL = {J. Math. Phys.},
    VOLUME = {30},
      YEAR = {1989},
    NUMBER = {6},
     PAGES = {1301--1305},
}
 
 \bib{Z1970}{article}{
author={Zerilli, Frank~J.},
title={{E}ffective {P}otential for {E}ven-{P}arity {R}egge--{W}heeler
  {G}ravitational {P}erturbation {E}quations},
journal={Phys. Rev. Lett.},
volume={24},
pages={737--738},
year={1970}
}

\bib{Zu2017}{article}{
   author={Zuily, Claude},
   title={Real analyticity of radiation patterns on asymptotically
   hyperbolic manifolds},
   journal={Appl. Math. Res. Express.},
   date={2017},
   number={2},
   pages={386--401},
}

\end{biblist}
\end{bibdiv}

\end{sloppypar}
\end{document}